\documentclass[12pt]{article}

\usepackage{amsmath}
\usepackage{amssymb}
\usepackage{amsthm}
\usepackage{color}
\usepackage[dvipsnames]{xcolor}
\usepackage{url}
\usepackage[top=1.5cm, 	bottom=2cm, left=3cm, right=3cm]{geometry}
\usepackage{hyperref}
\hypersetup{colorlinks=false, linkbordercolor={1 1 1}, citebordercolor={1 1 1}, urlbordercolor={1 1 1}} 
\usepackage[normalem]{ulem}
\reversemarginpar   

\usepackage{caption}
\usepackage{graphicx}
\graphicspath{{Figures/}} 
\usepackage{mathrsfs}

\numberwithin{equation}{section}

\usepackage{subcaption}
\usepackage{tikz}

\tikzset{dot/.style={circle, fill, inner sep=2pt}}

\newcommand{\mapsfrom}{\mathrel{\reflectbox{\ensuremath{\longmapsto}}}}

\usepackage{mathtools}
\usepackage[dvipsnames]{xcolor}

\newtheorem{theorem}{Theorem}
\newtheorem{proposition}[theorem]{Proposition}
\newtheorem{lemma}[theorem]{Lemma}
\newtheorem{corollary}[theorem]{Corollary}
\newtheorem{definition}[theorem]{Definition}

\newtheorem{example}[theorem]{Example}
\newtheorem{remark}[theorem]{Remark}

\newcommand{\parrafo}[1]{{\noindent \emph{#1}\;\;}}

\newcommand{\EE}{\mathbb E}

\newcommand{\NN}{\mathbb N}
\newcommand{\PP}{\mathbb P}
\newcommand{\R}{\mathbb R}
\newcommand{\RR}{\mathbb R}

\newcommand{\ZZ}{\mathbb Z}

\newcommand{\cA}{\mathcal A}

\newcommand{\cE}{\mathcal E}

\newcommand{\cN}{\mathcal N}

\newcommand{\cX}{\mathcal X}

\newcommand{\cW}{\mathcal W}

\newcommand{\sG}{\mathsf{G}}
\newcommand{\sM}{\mathsf{M}}
\newcommand{\sN}{\mathsf{N}}

\newcommand{\sU}{{\mathsf{U}}}
\newcommand{\sa}{{\mathsf{a}}}

\newcommand{\se}{{\mathsf{e}}}
\newcommand{\so}{{\mathsf{o}}}
\newcommand{\sg}{\mathsf{g}}

\newcommand{\su}{{\mathsf{u}}}

\newcommand{\hcW}{\widehat{\cW}}

\newcommand{\tu}{\tilde u}

\newcommand{\cXz}{{\check{\mathcal X}}}

\newcommand{\ckl}{{\check \ell}}

\newcommand{\cku}{{\check u}}

\newcommand{\ua}{\underline a}
\newcommand{\ub}{\underline b}

\newcommand{\uell}{\underline \ell}

\newcommand{\uu}{\underline u}

\newcommand{\one}{{\bf 1}\hskip-.5mm} 

\newcommand{\vep}{\varepsilon}
\newcommand{\eps}{\varepsilon}
\newcommand{\tsum}{\textstyle{\sum}}

\definecolor{lightgray}{rgb}{0.83, 0.83, 0.83}
\definecolor{lavendermist}{rgb}{0.9, 0.9, 0.98}
\definecolor{backgreen}{RGB}{82, 167, 106}
\definecolor{verdon}{RGB}{0, 100, 0}

\def\sqr{\vcenter{
		\hrule height.1mm
		\hbox{\vrule width.1mm height2.2mm\kern2.18mm\vrule width.1mm}
		\hrule height.1mm}} 
\def\square{\ifmmode\sqr\else{$\sqr$\vskip 3mm}\fi} 

\definecolor{cmm}{rgb}{0, .6, 0.4}
\definecolor{cmm'}{rgb}{.6, 0, .4}

\DeclareMathOperator{\Reflect}{Rf}

\allowdisplaybreaks
\title{\textsc{Slot decomposition of\\ continuous Box-Ball Systems}}
\date{\today}

\author{Inés Armendáriz, Pablo Blanc, \\
 Pablo A. Ferrari, Davide Gabrielli}

\begin{document}

\maketitle
	
\begin{abstract}
 \normalsize
We study a piecewise constant function $\eta:\RR\to\{-1,1\}$ with a finite number of discontinuities in any interval. We assume that the associated walk $\xi:\RR\to\RR$ satisfying $\xi'(x)=\eta(x)$, pinned by $\xi(0)=0$, has finite length excursions over past minima. This is the continuous generalization of an initial ball configuration in the discrete Box Ball System introduced by Takahashi and Satsuma, where solitons of integer sizes $k\ge1$ are identified. 

We extend the slot decomposition developed by Ferrari, Nguyen, Rolla and Wang in the discrete setting to the continuous case. Each soliton of $\xi$ is represented by a point in two dimensional space, one coordinate for position and the other for the soliton height, mapping $\xi$ to a point configuration. 

We consider a distribution on walks given by a product measure on the decomposition of the path into excursions over past minima. Excursions are distributed as products of their solitons weights, which are determined by the soliton heights. We show that when the weight function is in $L^1$ the slot decomposition of~$\xi$ is a Poisson process. This extends to the continuous case an approach of Ferrari and Gabrielli. As an example, we compute the intensity measure of the Poisson process associated to the asymmetric telegraph process introduced by Kac. In a forthcoming paper we discuss the dynamic properties.

\bigskip
		
\noindent {\em Keywords}: Continuous Box-Ball system, solitons, excursions, Pitman transformation, telegraph process.

\smallskip

\noindent{\em AMS 2010 Subject Classification}: 37B15, 37K40, 60C05

\end{abstract}

\tableofcontents
\section{Introduction}\label{s1}

The Box-Ball System (BBS) is a prominent cellular automaton introduced by Takahashi and Satsuma \cite{TS90}. The model exhibits interesting solitonic behavior while remaining simple enough to be solved and analyzed in detail. Interest in the BBS has been recently revitalized due to growing connections between statistical mechanics and integrable systems. As an ultradiscrete version of the Korteweg-de Vries (KdV) equation, its connections with other models of integrable systems warrant further detailed investigation.

In the discrete setting, Ferrari, Nguyen, Rolla and Wang \cite{FNRW} introduced a description of the trajectories as a bi-infinite matrix with rows indexed by soliton sizes and columns indexed by the points, or slots,  where these solitons are inserted, a construction they labelled the slot decomposition. This representation has enabled the derivation of several features of the model, including hydrodynamic behavior by Croydon and Sasada \cite{CroydonSasada20}, a characterization of invariant measures by Ferrari and Gabrielli \cite{FG19}, and the fluctuations of soliton positions by Olla, Sasada and Suda \cite{olla2025scalinglimitssolitonsboxball}.

In this paper we consider a continuous version of the BBS. A configuration is a function $(\eta(t))_{t\in\RR}$ taking values $1$ and $-1$, with a finite number of discontinuities in bounded sets. Each $\eta$ is associated to a continuous real valued walk $\xi=(\xi(t))_{t\in\RR}$ with derivatives $\xi'(t)=\eta(t)$ at continuity points of $\eta$. We provide an algorithmic description of the slot decomposition of $\xi$, which yields a point configuration on a half-plane. Generalizing the discrete construction in \cite{FG19}, we identify the law of the point process associated with a random $\xi$ as a non-homogeneous two-dimensional Poisson process.
In future works we will address the relation between the slot decomposition of $\xi$ with its decomposition in terms of real trees, see for instance the lecture notes of Evans~\cite{evans}. We will also describe the evolution of the slot decomposition under the continuous version of the BBS dynamics of $\eta$, which is equivalent to Pitman's transformation of $\xi$~\cite{PitmanBM-Bessel}, as observed by Croydon and Sasada in~\cite{zbMATH07500585}.

There are several equivalent characterizations of solitons, see Ferrari and Gabrielli \cite{FG18} for a discussion. We describe a simple algorithm that is particularly well suited to continuous walks. In this representation, every soliton is centered at a local maximum of $\xi$. Solitons are identified independently within each excursion over past minima of the walk. The procedure starts with the shortest solitons and then proceeds recursively to larger ones.

To each soliton we associate a pair $(u,\ell)$, where $\ell$ is the height of the soliton and $u$ encodes its relative position within the walk.
In this way, a zig-zag walk $\xi$ is identified with a point process $ \sN[\xi]\subset \RR\times \RR_+$, which we call the walk's slot decomposition;
see Fig.~\ref{fig:intro}.
This construction defines a bijection between an appropriate space of walks $\hcW$ and a corresponding space of point configurations $\cN$.

\begin{figure}
\centering
\begin{tikzpicture}[scale=0.2, thick]

    \draw[ultra thick] (0,0) -- (2,-2);
    
\begin{scope}[shift={(2,-2)}]
    \draw[blue, ultra thick] (0,0) -- (5,5) -- (9,1) -- (17,9) -- (21,5) -- (23,7) -- (30,0);
    \draw[red, ultra thick] (1,1) -- (5,5) -- (9,1) ;
    \draw[Green, ultra thick] (21,5) -- (23,7) -- (25,5);
\end{scope}

    \draw[ultra thick] (32,-2) -- (35,-5);
	\draw[orange,ultra thick] (35,-5) -- (40,0)-- (44,-4) -- (46,-2) -- (47,-3) -- (49,-1)-- (53,-5);  
	\draw[violet,ultra thick] (44,-4) -- (46,-2) -- (47,-3) -- (49,-1)-- (52,-4);   
	\draw[yellow,ultra thick] (45,-3) -- (46,-2) -- (47,-3);    
	\draw[ultra thick] (53,-5) -- (55,-7);

\end{tikzpicture}

\begin{tikzpicture}[scale=0.2, thick]

\draw (-1,0) -- (55,0);
\draw (0,-1) -- (0,10);

\begin{scope}[shift={(2,0)}]
	\node[dot, blue] at (0,9) {};	
	\node[dot, red] at (1,4) {};	
	\node[dot, Green] at (13,2) {};

	 \begin{scope}[shift={(3,0)}]
	 	\node[dot, orange] at (8,5) {};
	 	\node[dot, violet] at (17,3) {};
	 	\node[dot, yellow] at (32,1) {};
	 \end{scope}	   
\end{scope}	    

\end{tikzpicture}

\caption{On top, a walk $\xi$  with solitons identified by different colors. Below, the associated slot decomposition $\sN[\xi]$. }
\label{fig:intro}
\end{figure}

We define a measure over the space of walks.
For each excursion $\vep$ we define the measure $\mu_\alpha(\vep)$ proportional to $\prod_\gamma \alpha(\ell_\gamma)$, where the product runs over the solitons $\gamma$ of the excursion, $\ell_\gamma$ denotes the height of $\gamma$, and $\alpha$ is a prescribed parameter function.
We consider a random walk $\xi$ conditioned to have a record at the origin and concatenated iid excursions with law $\mu_\alpha$. The excursions are separated by intervals of records over past minima with lengths that are distributed as iid exponential random variables; the interval of records containing the origin is the sum of two of those variables.
We prove that the associated slot decomposition $\sN[\xi]$ is a Poisson point process on $\mathbb{R}\times\mathbb{R}_+$ with a non-homogeneous intensity measure $dx\, q(y)\,dy$, where the intensity function $q$ is explicitly related to the parameter function $\alpha$.

An important example is the asymmetric telegraph process, whose derivative $\xi'$ is a stationary continuous pure jump  Markov process on $\{-1,1\}$ with rates $\lambda_-$ to jump from $-1$ to $1$ and $\lambda_+$ for the reverse jump, and $\lambda_+>\lambda_-$. We show that in this case the excursions of the trajectory have law $\mu_{\alpha}$ with $\alpha(\ell)= \lambda_+\lambda_-e^{(\lambda_++\lambda_-)\ell}$, and the intervals of records connecting the excursions are iid exponential $\lambda_-$ random variables. We compute explicitly the Poisson intensity $q$ as a function of $\lambda_\pm$. The telegraph process was introduced by  Kac \cite{zbMATH03491985}, see the book of Ratanov and Kolesnik \cite{zbMATH07582508} for the asymmetric version.

 Our approach is based on the slot decomposition introduced in \cite{FNRW} for the discrete case, and the articles \cite{FG19,FG18} that establish a bijection between measures based on soliton weights and product measures on bi-infinite matrices $\chi:\ZZ\times \NN\to\NN\cup\{0\}$.

We mention some references in the discrete BBS literature that are relevant for the slot decomposition. Hambly, Martin and O'Connell \cite{HamblyMartinOConnell01} proved that stationary Markov measures on $\{-1,1\}^\ZZ$ are invariant for Pitman's transformation; see also
 \cite{cs-markov} and \cite{FNRW}. In the continuous setup, Markov corresponds to the, possibly asymmetric, telegraph process. 
Mucciconi, Sasada, Sasamoto, Suda \cite{mss2023} have recently proved that the slot decomposition in \cite{FNRW} is equivalent to the Kerov-Kirillov-Reschetikhin (KKR) bijection studied by  Kuniba, Okado, Sakamoto,Takagi, Yamada \cite{kosty2006}, see also Kirillov-Sakamoto \cite{KS2009} and Mada, Idzumi, Tokihiro \cite{MIT}. 

Related dynamics and its scaling limits include Hard rods system by Boldrighini, Dobrushin, Suhov \cite{bds,BS}, Ferrari, Franceschini, Grevino, Spohn \cite{ffgs}, Ferrari, Olla \cite{fo2022}; Toda Lattice by Spohn \cite{zbMATH07852571}, Cao, Bulchadani, Spohn \cite{cbs}, Croydon, Sasada and Tsujimoto \cite{zbMATH07653279,zbMATH07620722}, Aggarwal, Nicoletti \cite{arXiv:2604.14346}.   See also the surveys Bulchandani-Cao-Moore \cite{bulchandani-cao-moore}, Doyon \cite{Doyon_2019}, Spohn \cite{Spohn_2019,spohn2023}.

The article is organized as follows.
In Section~\ref{s1} we perform the identification of solitons and the slot decomposition of a continuous ball configuration, and describe the sets of walks and point configurations where the decomposition is a bijection. In Section~\ref{s40} we consider random excursions, introduce the measure $\mu_\alpha$, a Poisson process with intensity $dxq(y)dy$, and establish conditions on $\alpha$ and $q$ under which these measures are equivalent. In Section~\ref{ss32} we extend both measures to walk configurations and explore the relation between both representations. In Appendix~\ref{appendix} we discuss soliton ties.

\section{Soliton decomposition and slot representation}
\label{s1}

The goal of this section is to construct a bijection between walks and point configurations.
After introducing the necessary definitions in Subsection~\ref{ss11}, we 
identify the solitons in a walk, one per local maximum.
We then construct the point configuration by associating  a $2$-dimensional point to each soliton, whose coordinates are determined by the insertion position of the soliton in the walk and its height.
Finally we show how to reconstruct the walk from the point configuration. We do this construction for a single, isolated excursion in a first step, Subsection~\ref{sde2}, and then extend it to the full trajectory. 

In several stages of the construction we need to identify the points realizing the absolute extrema of a trajectory over a given interval. Where necessary, we define explicit tie-breaking rules. However, in the absence of such instructions, any valid selection may be used. Detailed analyses of specific cases are provided in the Appendix, which serves as a reference for readers seeking further clarification on these subtleties.

\subsection{Ball configurations, walks and carrier load}
\label{ss11}

\parrafo{Continuous ball configurations} Let
\begin{align*}
  \cXz\vcentcolon =
  \{\eta:\RR\to\{-1, 1\}: \text{càdlàg with finitely many jumps in bounded intervals}\}.
\end{align*}
A function $\eta\in\cXz$ is characterized by the sets of discontinuity points $A[\eta]$ and $B[\eta]$, given by
\begin{align}
 \label{ab1}
 A[\eta] \vcentcolon = \{x\in\RR: \eta(x)-\eta(x-)=2\}, \quad B[\eta] \vcentcolon= \{x\in\RR: \eta(x)-\eta(x-)=-2\},
\end{align}
where $\eta(x-)\vcentcolon =\lim_{y\uparrow x} \eta(y)$. 
We interpret the interval between an upward jump and the nearest downward jump to its right as a continuous set of balls, in analogy with the discrete BBS.

\parrafo{Walk representation} Given a ball configuration $\eta$ denote by  $\xi=\xi[\eta]$ the associated zig-zag walk, a continuous function pinned at the origin, $\xi(0)\vcentcolon =0$, with derivatives
\begin{align}
 \xi'(x) \vcentcolon = \eta(x), \quad x\notin A[\eta]\cup B[\eta]. \label{z23}
\end{align}
The elements of the sets $A[\eta]$ and $B[\eta]$ are the local minima and local maxima of $\xi$, respectively. We also refer to them as $A[\xi]$ and $B[\xi]$. 
Let $\cW$ be the set of walks given by
\begin{align}
  \cW \vcentcolon=\left\{\xi[\eta]: \liminf_{x\to\mp\infty} \xi[\eta](x) = \pm \infty, \,\eta\in\cXz\right\}, \label{cZ}
\end{align}
and let $\cX$ be the set of ball configurations associated to walks in $\cW$, that is
\begin{align}
\cX \vcentcolon = \bigl\{\eta[\xi]: \xi\in \cW\bigr\}. \label{cX}
\end{align}

\parrafo{Past minimum} For $\xi\in\cW$ we define the past minimum function $m=m[\xi]:\RR\to \RR$ by 
\begin{align}
 \label{lm9}
 m(x) \vcentcolon= \min\bigl\{\xi(z):z\le x\bigr\}.
\end{align}

\parrafo{Records} We say that $r\in\RR$ is a record for $\xi$ if $r$ is the first time the path reaches level $\xi(r)$, that is, $m(r)=\xi(r)$ and $\xi(x)>\xi(r)$ for every $x<r$. The $y$-record is defined by
\begin{align}
 \label{rec1}
 r(y) &\vcentcolon= \inf \xi^{-1}(y).
\end{align}
Notice that $\xi^{-1}(y)$ is non-empty and $r(y)>-\infty$ for $\xi\in\cW$.
Denote the set of records by
\begin{align}
  \label{R28}
 R[\xi]&\vcentcolon= \{r(y):y\in\RR\}.
\end{align}

Let $\hcW$ be the set of walks in $\cW$ having record $0$ at the origin,
\begin{align}
  \label{hcW}
  \hcW \vcentcolon = \{\xi\in \cW: \xi(0)=0<\xi(x),\,x<0 \}.
\end{align}

\parrafo{Excursions.} When $\xi\in\cW$, the mapping $y\mapsto m^{-1}(y)$ associates to each ordinate $y$ a set containing either a single point or the interval $[r(y), r(y-)]$. The restriction of the walk
\begin{align}
  \label{ex29}
 \big (\xi(x):x\in (r(y), r(y-)]\bigr), 
\end{align}
is called the excursion at level $y$. If $m^{-1}(y)=\{r(y)\}$, then $r(y)=r(y-)$, and the excursion at level $y$ is the empty set. By construction, $\xi(r(y))=\xi(r(y-))$, and if the excursion is not empty, then $\xi(x)\ge \xi(r(y)),\,x\in (r(y), r(y-)]$. We use the word excursion to refer to a non empty excursion.

\parrafo{Carrier load process.} The carrier load process $\zeta[\xi]$ is defined by
\begin{align}
  \label{clp1}
  \zeta(x) \vcentcolon = \xi(x)-m(x), \qquad x\in\RR.
\end{align}
Conversely, 
the increments of $\xi$ are determined by $\zeta$,
\begin{align}
  \label{4}
  \xi(x)-\xi(x') = \zeta(x)-\zeta(x')- |\{z\in [x,x']:\zeta(z)=0\}|,\qquad x'<x.
\end{align}
The trajectories $\xi$ and $\zeta$ have the same excursions above the past minimum, and the flat regions of $\zeta$ correspond to records of $\xi$.

\subsection{Soliton identification}
\label{siw3}

We identify the solitons within each excursion iteratively: we first identify the shortest soliton, remove it, we then identify the shortest among the remaining solitons, and so on.

Fix an excursion $\eps=\big (\xi(x):x\in (r(y), r(y-)]\bigr)$ and let
\begin{align}
  \label{9}
(\so,\se]=(r(y), r(y-)]. 
\end{align}

The end points of the interval are called the origin and the end of the excursion.
The set of local minima is denoted  $A[\eps]=\{a_1^1, \dots, a_n^1\}$, while $B[\eps]=\{b_1^1, \dots, b_n^1\}$ is the set of local maxima.
They satisfy $o=a_1^1<b_1^1<a_2^1<\dots<a_n^1<b_n^1<\se$. They determine $2n$ open intervals that do not contain extrema. We will refer to the first $2n-1$ of these intervals as runs. They are given by
\begin{equation*}
(a_i^1, b_i^1),\,i=1, \dots, n, \qquad\text{and}\qquad (b_i^1,a_{i+1}^1), \, i=1, \dots, n-1.
\end{equation*}

We order the runs by increasing length. When there are two or more runs of the same length, we sort them by their left endpoint; for simplicity, we will refer to the first of the runs, in this order, as the shortest run.
Note that since $\eps(\se)\leq \eps(a_n)$ we have that $(a_n^1, b_n^1)$ is necessarily shorter than $(b_n^1,\se)$. This last interval is technically not a run, in fact, it will be part of a longer interval that includes the interval of records separating $\eps$ from the next excursion in the trajectory. 
We conclude that the shortest run is $(a,b)$ or $(b,a)$ for some $a\in A[\eps]$ and $b\in B[\eps]$.

Denote the shortest run height by
\begin{align*}
  \ell\vcentcolon = |b-a|.
\end{align*}
Define the (shortest) soliton $\gamma_1$ by
\begin{gather}
 \label{sol1}
 \gamma_1 \vcentcolon = (b-\ell, b+\ell], 
\end{gather}
the interval with center $b$ and radius $\ell$. Denote
\begin{align}
  a_1\vcentcolon =a, \quad b_1\vcentcolon =b,\quad \ell_1\vcentcolon =\ell,\label{sol3}
\end{align}
the minimum,  maximum, and height of the shortest soliton $\gamma_1$.
The length of the soliton is $2\ell_1$.
See Fig.~\ref{solid1}.

\begin{figure}[th]
\centering
\begin{tikzpicture}[scale=0.4, thick]
    \draw[ultra thick] (0,0) -- (5,5) -- (9,1) -- (17,9) -- (21,5) -- (23,7) -- (30,0);
    
    \draw[Green, ultra thick] (21,5) -- (23,7) -- (25,5);

    \node[dot, Green,label={[Green]below:{$b_1$}}] at (23,0) {};
    \node[dot, Green,label={[Green]below:{$a_1$}}] at (21,0) {};
    
    \draw[dashed, Green, ultra thick] (23,7) -- (23,5);
    \node[Green,right] at (23,5.5) {$\ell_1$};
    
    \draw [Green] (21,0) -- (25,0);
    \node[Green,above] at (23,0) {$\gamma_1$};

\end{tikzpicture}
\caption{First step of the soliton identification process. Here $\gamma_1=(a_1, a_1+2\ell_1]$.}\label{solid1}
\end{figure}

If there are no more maxima, $B[\eps]=\{b_1\}$,  then $\eps$ has only one soliton, and we define $\Gamma[\eps]=\{\gamma_1\}$.
Otherwise, we cut out previously identified solitons and iterate.
In what follows we drop the dependence of $\eps$ when clear from context.

After $k$ steps we have $\Gamma_k=\{\gamma_1,\gamma_2, \dots,\gamma_k\}$.
We will work with the excursion obtained after removing the previously identified solitons.
To avoid moving every point, we introduce a family of metric spaces indexed by the excursion $\eps$ and the number of previously identified solitons.
See Fig.~\ref{firststep}.

Define the metric space $(R_k, d_k)$, where $R_k$ is the set of points not occupied by the previous identify solitons, and $d_k$ is the distance restricted to $R_k$, 
\begin{align}
 \label{rel1}
 R_k&\vcentcolon =(\so,\se]\setminus \cup_{\gamma\in \Gamma_k}\gamma, \qquad d_k(x, z) \vcentcolon = \bigl|[x, z ]\cap R_k\bigr|, 
\end{align}
for $x<z$ in $R_k$.
Here $|\cdot|$ denotes the Lebesgue measure on $\RR$.
By symmetry, $d_k(z, x)\vcentcolon =d_k(x, z)$.

\begin{figure}[th]
    \centering
    \begin{subfigure}[b]{0.48\textwidth}
        \centering
        \begin{tikzpicture}[scale=0.2, thick]
    \draw[ultra thick] (25,5) -- (30,0);
    \draw[ultra thick] (0,0) -- (5,5) -- (9,1) -- (17,9)-- (21,5);

    \node[label={[orange]below:{$R_1$}}] at (15,0) {};

    \draw [orange] (0,0) -- (21,0);
    \draw [orange] (25,0) -- (30,0);
\end{tikzpicture}
        \caption{The excursion after identifying $\gamma_1$}
    \end{subfigure}
    \begin{subfigure}[b]{0.48\textwidth}
        \centering
        \begin{tikzpicture}[scale=0.2, thick]
    \draw[ultra thick] (0,0) -- (5,5) -- (9,1) -- (17,9)-- (26,0);
        \node[label={[white]below:{$R_1$}}] at (15,0) {};
\end{tikzpicture}
        \caption{The equivalent excursion.}
    \end{subfigure}
    \caption{The excursion after removing one soliton}
    \label{firststep}
\end{figure}

We now identify the solitons inductively and prove the following claim: $\eps$ restricted to $(R_k, d_k)$ is a continuous zig-zag excursion with minima and maxima given by $A_{k+1}=A\setminus \{a_1, \dots, a_k\}=\{a^{k+1}_1,\dots,a_{n-k}^{k+1}\}$ and
$B_{k+1}=B\setminus \{b_1, \dots, b_k\}=\{b^{k+1}_1,\dots,b_{n-k}^{k+1}\}$.
And they satisfy $o=a_1^{k+1}<b_1^{k+1}<a_2^{k+1}<\dots<a_{n-k}^{k+1}<b_{n-k}^{k+1}<e$.

We identify the shortest run with respect to $d_k$, say $(a,b)$ (the case $(b,a)$ is analogous).
Let $\ell= d_k(a, b)$.
Observe that since $\eps$ is a zig-zag we have $\ell= \eps(b)-\eps(a)$.

Define
\begin{align}
 \gamma_{k+1}&\vcentcolon = \bigl(b-x_{k+1},b+z_{k+1}\bigr]\;\cap\; R_k, \label{sol6}
 \\
 z_{k+1} &\vcentcolon =\text{solution $z>0$ of } d_{k}(b, b+z)=\ell,\notag
 \\
 x_{k+1} &\vcentcolon =\text{solution $x>0$ of } d_{k}(b-x,b)=\ell.\notag
\end{align}
In other words, $\gamma_{k+1}$ is the interval contained in $R_k$ with the $d_k$ metric, centered at $b$ with radius $\ell$.
Observe that $b-x_{k+1} =a$.  

Since $(a,b)$ is the shortest run there is no minimum in $(b,b+z_{k})$.
Recalling that $\eps$ restricted to $(R_k, d_k)$ is a continuous zig-zag we get $\eps(a)=\eps(b+z_{k})$.
Therefore the resulting path after removing $\gamma_{k+1}$ is a continuous zig-zag.

Observe that $a$ and $b$ are the only minimum and maximum in $\gamma_{k+1}$.
Also, given that $\tilde b < a < b < \tilde a$ are consecutive min/max then we have $b+z_{k}\leq\tilde a$ and $\eps$ is decreasing in $(\tilde b,\tilde a)\cap (R_k\setminus \gamma_{k+1})$.
We conclude that the min/max of $\eps$ restricted to $R_k\setminus \gamma_{k+1}$ are exactly $A_{k+1}\setminus \{a\}$ and $B_{k+1}\setminus \{b\}$.
We have proved our claim and conclude the soliton identification.
See Fig.~\ref{solifull}.

Note that since $(a,b)$ is the shortest, every other run is equal in length or longer. Also, 
we have $\eps(\tilde b)>\eps(b)>\eps(a)\ge\eps(\tilde a)$, and it follows that the run that appears after removing $\gamma$, that is $(\tilde b,\tilde a)$, is longer than 
$(a,b)$.
We conclude that 
\begin{equation}
\ell_1\leq \ell_2\leq \cdots\leq \ell_n.
\label{orderedell}
\end{equation}

In the last step of the identification, when there is only one remaining maximum, we get that $\gamma$ is equal to the whole $R_k$, so we conclude that
\begin{align}
  \label{1}
 (\so,\se]=\bigcup_{b\in B[\eps]}\gamma_b. 
\end{align}
By \eqref{sol6}, the union is disjoint. Since over the soliton with height $\ell_i$ the excursion climbs up and then down on intervals that add to a total length $2 \ell_i$, it easily follows that
\begin{align}
\label{eq:4}
 \sum_{i=1}^n (b_i-a_i)= \sum_{i=1}^{n-1}(a_{i+1}-b_i) + (\se-b_n)= \sum_{i=1}^n \ell_i.
\end{align}

\begin{figure}[th]
\centering
\begin{tikzpicture}[scale=0.4, thick]
    \draw[ultra thick] (0,0) -- (5,5) -- (9,1) -- (17,9) -- (21,5) -- (23,7) -- (30,0);
    
    \draw[blue, ultra thick] (0,0) -- (5,5) -- (9,1) -- (17,9) -- (21,5) -- (23,7) -- (30,0);
    \node[dot, blue,label={[blue]below:{$b_3$}}] at (17,0) {};
    \node[dot, blue,label={[blue]below:{$a_3$}}] at (0,0) {};
    \draw[dashed, blue, ultra thick] (17,0) -- (17,9);
    \node[blue,right] at (17,5.5) {$\ell_3$};
    \draw [blue] (0,0) -- (30,0);
    \node[blue,above] at (15,0) {$\gamma_3$};
    
    \draw[red, ultra thick] (1,1) -- (5,5) -- (9,1) ;
    \node[dot, red,label={[red]below:{$b_2$}}] at (5,0) {};
    \node[dot, red,label={[red]below:{$a_2$}}] at (9,0) {};
    \draw[dashed, red, ultra thick] (5,1) -- (5,5);
    \node[red,right] at (5,2.5) {$\ell_2$};
    \draw [red] (1,0) -- (9,0);
    \node[red,above] at (3,0) {$\gamma_2$};
    
    \draw[Green, ultra thick] (21,5) -- (23,7) -- (25,5);
    \node[dot, Green,label={[Green]below:{$b_1$}}] at (23,0) {};
    \node[dot, Green,label={[Green]below:{$a_1$}}] at (21,0) {};
    \draw[dashed, Green, ultra thick] (23,7) -- (23,5);
    \node[Green,right] at (23,5.5) {$\ell_1$};
    \draw [Green] (21,0) -- (25,0);
    \node[Green,above] at (24,0) {$\gamma_1$};

\end{tikzpicture}
\caption{The soliton identification.}\label{solifull}
\end{figure}

Given $b\in B[\xi]$ we have that $b\in B[\eps]$ for some excursion $\vep$.  
We denote by $\ell_b$, $a_b$ and $\gamma_b$ the size, minimum and soliton associated to $b$, obtained by the algorithm applied to the excursion $\vep$. 
We also denote the origin and end of the soliton by
\begin{align}\label{obeb}
o_b=\inf \gamma_b \quad \text{and} \quad e_b=\max \gamma_b.
\end{align}

\subsection{Slot decomposition}
\label{subsecsol}

We introduce a family of metric spaces indexed by the walk $\xi\in\cW$ and soliton heights. Given a height $\ell$, define the metric space $(\RR_\ell, d_\ell)$, where $\RR_\ell=\RR_\ell[\xi]$ is the set of points not occupied by solitons $\gamma_b$ with $\ell_b\leq\ell$, and $d_\ell=d_\ell[\xi]$ is the distance restricted to $\RR_\ell$, 
\begin{align}
 \label{rel1}
 \RR_\ell&\vcentcolon =\RR\setminus \cup_{b\in B, \ell_b\le \ell}\,\gamma_b, \qquad d_\ell(x, z) \vcentcolon = \bigl|[x, z ]\cap \RR_\ell\bigr|, 
\end{align}
for $x<z$ in $\RR_\ell$; by symmetry, $d_\ell(z, x)\vcentcolon =d_\ell(x, z)$.
The space $\RR_\ell$ includes every record and for each excursion the points in $R_k$ where $k$ is the number of solitons of height less that $\ell$ within the excursion.

Define $\xi_\ell:\RR_\ell\to\RR$, the walk restricted to $\RR_\ell$, by
 \begin{gather}
 \label{xil1}
  \xi_\ell(x) =\xi(x), \; x\in \RR_\ell.
\end{gather}
Notice that $\xi_\ell$ is a continuous function with respect to the distance $d_\ell$.

For $b\in B[\xi_\ell]$, define the interval
\begin{align}
 I_\ell(b)&\vcentcolon = \bigl(b-\tilde x,b+\tilde z\bigr]\label{defI}
 \\
 \tilde z &\vcentcolon =\text{solution $z>0$ of } d_\ell(b, b+z)=\ell\notag
 \\
 \tilde x &\vcentcolon =\text{solution $x>0$ of } d_\ell(b-x,b)=\ell.\notag
\end{align}

Define the $\ell$ slot space $S_\ell=S_\ell[\xi]$ by
\begin{align}
  \label{2}
  S_\ell\vcentcolon = \RR_\ell \setminus \cup_{b\in B[\xi_\ell]} I_\ell(b).
\end{align}
See Fig.~\ref{alter}.

\begin{figure}[th]
\centering
\begin{tikzpicture}[scale=0.2, thick]

    \draw[ultra thick] (0,0) -- (2,-2);
    
\begin{scope}[shift={(2,-2)}]
    \draw[blue, ultra thick] (0,0) -- (5,5) -- (9,1) -- (17,9) -- (21,5) -- (23,7) -- (30,0);
    \draw[red, ultra thick] (1,1) -- (5,5) -- (9,1) ;
    \draw[Green, ultra thick] (21,5) -- (23,7) -- (25,5);
\end{scope}

    \draw[ultra thick] (32,-2) -- (35,-5);
	\draw[orange,ultra thick] (35,-5) -- (40,0)-- (44,-4) -- (46,-2) -- (47,-3) -- (49,-1)-- (53,-5);  
	\draw[violet,ultra thick] (44,-4) -- (46,-2) -- (47,-3) -- (49,-1)-- (52,-4);   
	\draw[yellow,ultra thick] (45,-3) -- (46,-2) -- (47,-3);    
	\draw[ultra thick] (53,-5) -- (55,-7);

    \begin{scope}[shift={(0,-20)}]
    
    \draw[fill=Green!30,draw=none] (25,0) -- (27,2) -- (27,13)-- (23,13)-- (23,2) -- cycle;
    \draw[fill=red!30,draw=none] (7,0) -- (11,4) -- (11,13)-- (3,13)-- (3,4) -- cycle;
    
    \draw[fill=blue!30,draw=none] (19,0) -- (23,4) -- (23,13)-- (11,13)-- (11,8) -- cycle;
    \draw[fill=blue!30,draw=none] (3,8) -- (2,9) -- (2,13) -- (3,13) -- cycle;
    \draw[fill=blue!30,draw=none] (27,4) -- (32,9) -- (32,13)-- (27,13)--  cycle;

	\draw[fill=yellow!30,draw=none] (46,0) -- (47,1) -- (47,13)-- (45,13)-- (45,1) -- cycle;

	\draw[fill=purple!30,draw=none] (49,0) -- (52,3) -- (52,13)-- (47,13)-- (47,2) -- cycle;   
	\draw[fill=purple!30,draw=none] (45,2) -- (44,3)-- (44,13) -- (45,13) -- cycle;
	
	\draw[fill=orange!30,draw=none] (40,0) -- (44,4) -- (44,13)-- (35,13)-- (35,5) -- cycle;  
	
	\draw[fill=orange!30,draw=none] (52,4) -- (53,5) --(53,13) -- (52,13)  -- cycle;

    \draw (-1,0) -- (55,0);
    \draw (0,-1) -- (0,15);
    \node[left] at (0,7) {$\ell$};
\end{scope}

\end{tikzpicture}
\caption{Draw a horizontal line at level $\ell$. Then $S_\ell$ is given by the portions of the line that intersect the region in white.
It is obtained by removing the solitons with height smaller that $\ell$ and the $I_\ell$ regions associated to solitons of height greater than $\ell$.}
\label{alter}
\end{figure}

Consider the distance $h_\ell$ on $S_\ell$ given by
\begin{align}
 \label{dhs1}
 h_\ell(x, z)&\vcentcolon = \bigl| [x, z]\cap S_\ell\bigr|, \qquad x<z, \;x, z\in S_\ell.
\end{align}
and by symmetry, $h_\ell(z, x)\vcentcolon =h_\ell(x, z)$.
Recall the definition of the set $\hcW$ in \eqref{hcW}. The slot decomposition of $\xi\in\hcW$ is the discrete set $\sN=\sN[\xi]\subset\RR\times \RR_+$ given by
\begin{align}
 \sN &\vcentcolon =\sN_+\cup\sN_-, \label{nxi1}\\
  \sN_+&\vcentcolon = \bigl\{\bigl( h_{\ell_b}(0, o_b)\, , \, \ell_b\bigr) :b\in B, \, b>0 \bigr\}
         \label{nxi+}\\
  \sN_-&\vcentcolon = \bigl\{\bigl(-h_{\ell_b}(o_b,0 )\, , \, \ell_b\bigr) :b\in B, \, b<0 \bigr\},
         \label{nxi-}
\end{align}
$o_b$ as in \eqref{obeb}.
See Fig.\/~\ref{inverse1}.

We use here the set notation although $\sN$ is in fact a multiset.
Points may appear with multiplicity; see the appendix for details.

{ \centering
 \includegraphics
 [width=.95\textwidth]
 {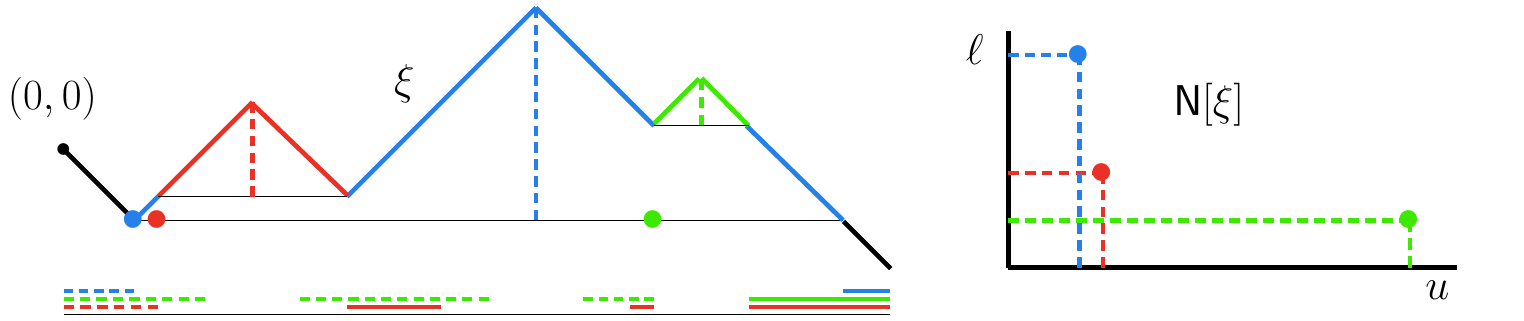}
 \captionof{figure}{Left: an excursion  $\xi$ with 3 solitons. Each vertical line segment represents the height $\ell_b$ of the soliton $\gamma_b$. The 3 bearers $o_b$ are represented by dots.  Dashed horizontal segments depict the slot space in the interval $[0, o_b]$ associated to each soliton height, and solid segments represent additional slot space. Right: Each maximum $b$ of $\xi$ is mapped to a point $(u_b,\ell_b)$ in $\sN[\xi]$, depicted by a dot. The horizontal coordinate $u_b$  is the $h_{\ell_b}$ distance between the origin and the soliton bearer $o_b$. }\label{inverse1}
}

\begin{lemma}[Soliton exclusion rule]
 \label{exc1}
Let $\xi\in\cW$, $\ell>0$, and $b,b'\in B[\xi]$ with $\ell<\ell_b\le \ell_{b'}$. Then 
\begin{equation}
I_\ell(b)\cap I_\ell(b')=\emptyset
\label{5}
\end{equation}
\end{lemma}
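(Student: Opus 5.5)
We work in the metric space $(\RR_\ell,d_\ell)$ with the walk $\xi_\ell$. The plan is to reduce the statement to the soliton identification of Subsection~\ref{siw3}, applied to the excursion (of $\xi$ restricted to $\RR_\ell$) that contains $b$ and $b'$.

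\textbf{Step 1 (different excursions).} First dispose of the case where $b$ and $b'$ lie in different excursions of $\xi$. Then $\gamma_b$ and $\gamma_{b'}$ are contained in disjoint intervals $(\so,\se]$ and $(\so',\se']$. We show $I_\ell(b)\subset\gamma_b\cap\RR_\ell$. By \eqref{sol6}, $\gamma_b$ is the $d_k$-ball of radius $\ell_b$ around $b$. The solitons removed before $\gamma_b$ in that excursion are exactly those of height $\le\ell_b$ that precede it, and they include all those of height $\le\ell$ (by \eqref{orderedell}, up to ties of height equal to $\ell_b$). Hence, within $\gamma_b$, the set $\RR_\ell$ contains $R_k\cap\gamma_b$. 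So the $d_\ell$-ball of radius $\ell<\ell_b$ around $b$ lies inside $\gamma_b$, and disjointness follows.

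\textbf{Step 2 (same excursion).} The same inclusion $I_\ell(b)\subset\gamma_b$ holds there too. The main claim is then that $\gamma_b\cap\gamma_{b'}=\emptyset$ for $b\neq b'$, which is just the disjointness noted after \eqref{1}. More carefully, $I_\ell(b)$ is the $d_\ell$-ball of radius $\ell$ about $b$ in $\RR_\ell$. Points of $\RR_\ell\setminus\gamma_b$ at $d_\ell$-distance less than $\ell$ from $b$ would have to be reached by crossing the boundary of $\gamma_b$. But inside $\gamma_b$ the $d_\ell$-distance from $b$ to each endpoint is at least $d_k$-distance $=\ell_b>\ell$, since $\RR_\ell\cap\gamma_b\supseteq R_k\cap\gamma_b$. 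So $I_\ell(b)\subset\gamma_b$, and \eqref{5} follows from the disjointness of the $\gamma$'s.

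\textbf{Main obstacle.} The delicate point is the inclusion $\RR_\ell\cap\gamma_b\supseteq R_k\cap\gamma_b$, i.e.\ that every soliton removed inside $\gamma_b$ before step $k+1$ that is not of height $\le\ell$ is still accounted for. The solitons nested inside $\gamma_b$ with heights in $(\ell,\ell_b]$ are \emph{not} removed in $\RR_\ell$; this only increases $d_\ell$-distances relative to $d_k$, which is the favorable direction. Solitons of height $\le\ell$ nested in $\gamma_b$ were all removed by step $k$, since they are identified earlier by \eqref{orderedell}. Solitons not nested in $\gamma_b$ are disjoint from it by \eqref{sol6}. So, measured along $\RR_\ell$, the distance from $b$ to each end of $\gamma_b$ is at least $\ell_b>\ell$.

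The remaining technical care is the half-open endpoints and ties $\ell_{b''}=\ell$, which we would handle with the convention of the Appendix. For the strict inequality $\ell<\ell_b$, endpoints cause no overlap. Note that the hypothesis $\ell_b\le\ell_{b'}$ is only used to label the pair; the argument is symmetric.
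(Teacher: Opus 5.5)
Step 2 has a genuine gap: it uses two different objects under the name $\gamma_b$. By \eqref{sol6}, $\gamma_b=(o_b,e_b]\cap R_k$ is the hull $(o_b,e_b]$ minus the solitons identified earlier. It is these sets that are pairwise disjoint, as noted after \eqref{1}.

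Your estimate $d_\ell(b,e_b)\ge d_k(b,e_b)=\ell_b>\ell$, and its mirror image, is correct. But it only shows $I_\ell(b)\subset(o_b,e_b]$, the hull, and hulls are nested rather than disjoint: a smaller soliton sits inside the hull of a larger one. In Fig.~\ref{solifull} take $\ell=3$, $b=b_2$, $b'=b_3$. The hull of $\gamma_{b_3}$ is $(0,30]$ and contains $\gamma_{b_2}=(1,9]$, so the inclusions $I_3(b_2)\subset(1,9]$ and $I_3(b_3)\subset(0,30]$ separate nothing.

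Reading $\gamma_b$ as the set does not rescue the argument either. The ``boundary'' of $\gamma_b$ then includes the inner holes left by nested solitons, and the $d_k$-distance from $b$ to those holes can be far below $\ell_b$. In the same example, the $d_k$-distance from $b_3=17$ to the hole $(21,25]$ left by $\gamma_{b_1}$ is $4<9=\ell_{b_3}$. So ``each endpoint is at distance $\ell_b$'' fails for the inner endpoints.

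The nested solitons with heights in $(\ell,\ell_b)$, which you dismiss as ``the favorable direction'', are exactly the problem. They lie in $\RR_\ell$ and inside the hull of $\gamma_b$, but not in $\gamma_b$. Nothing you wrote prevents $I_\ell(b')$ from reaching $I_\ell(b)$ when $\gamma_b$ is nested inside the hull of $\gamma_{b'}$, which is the main case. Step 1 is fine, since there the hull inclusion is all you need.

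The missing ingredient is a lower bound on the $d_\ell$-length of the monotone stretches of $\xi_\ell$:
\begin{itemize}
\item Within an excursion, \eqref{orderedell} gives $\RR_\ell\cap(\so,\se]=R_k$, where $k$ is the number of solitons of height at most $\ell$.
\item Every run of $\xi$ on $(R_k,d_k)$ is at least as long as the shortest one, whose length is $\ell_{k+1}>\ell$.
\item Two maxima $b<b'$ of $\xi_\ell$ are separated by a minimum $a$ of $\xi_\ell$, so $d_\ell(b,a)$ and $d_\ell(a,b')$ both exceed $\ell$.
\item Hence the $d_\ell$-balls of radius $\ell$ around $b$ and $b'$ cannot meet.
\end{itemize}
This is the paper's proof, and it works directly with $b$ and $b'$ without passing through $\gamma_b$. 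If you want to keep your route, you need this same run-length bound to show that the ball around $b'$ avoids the solitons nested in its hull. Once you have it, the detour through disjointness of solitons is no longer needed.
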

\begin{proof}
There is a minimum in $\RR_{\ell}$, say $a$, between the maxima $b$ and $b'$. By \eqref{orderedell}, we have $d_{\ell}(b, a)\ge \ell$, $d_{\ell}(b', a)\ge \ell$, implying $d_{\ell}(b,b')> 2\ell$ and therefore \eqref{5}.
\end{proof}

For later use, we observe that the amount of $\ell$ slot space of an excursion $\eps$ with domain $(\so,\se]$ is given by
\begin{equation}
h_\ell(\so,\se)=|(\so,\se]\cap S_\ell|=2\sum_{b\in B[\eps]}(\ell_b-\ell)^+.
\label{eqhelloe}
\end{equation}
Indeed, each soliton with height larger than $\ell$ contributes $2(\ell_b-\ell)^+$
to the $\ell$ slot space.
We have used that the excursion is the disjoint union of its solitons, as given by \eqref{1}, and the soliton exclusion rule \eqref{5}.

In the remaining of this subsection we define a space $\cN$ containing the image of the map $\xi\mapsto \sN[\xi]$. We show later, in Section \ref{ipc1}, that $\xi\mapsto \sN[\xi]$ is a bijection between $\hcW$ and $\cN$. 

Given $y>0$ we consider the path from 0 to $x=r((-y)-)$.
We have
\begin{equation}
h_\ell(0,x)=|(0,x]\cap S_\ell|=y+2\sum_{b\in B[\xi]\cap (0,x]}(\ell_b-\ell)^+.
\label{formulahell}
\end{equation}
That is, the $\ell$ slot space is given by the contributions of the records, that is $y$, plus the contributions of all the excursions.

With $x$ and $y$ as above, let
\begin{equation}
\mathcal T(y)= \{(u,\ell)\in \sN_+: u\leq h_\ell(0,x)\}.
\label{defRy}
\end{equation}
Given $b\in B[\xi]\cap (0,x]$ we have $o_b\leq x$ so
$h_{\ell_b}(0,o_b)\leq h_{\ell_b}(0,x)$.
And in the same way for $b\in B[\xi]\cap (x,+\infty)$ we have  
$h_{\ell_b}(0,o_b)> h_{\ell_b}(0,x)$.
Therefore $\mathcal T(y)$ is given by the points corresponding to maxima in $B[\xi]\cap (0,x]$, that is
\[
\mathcal T(y)= \bigl\{\bigl( h_{\ell_b}(0, o_b)\, , \, \ell_b\bigr) :b\in B[\xi]\cap (0,x] \bigr\}
\]
which is finite since $B[\xi]\cap (0,x]$ is finite.
We can write
\begin{equation}
\mathcal T(y)= \{(u,\ell)\in \sN_+: u\leq y+2\sum_{(\tu,\tilde \ell) \in \mathcal T(y)}(\tilde \ell-\ell)^+\}.
\label{eqRy}
\end{equation}

Given $\sN\subset\RR\times \RR_+$, let 
\begin{align}\label{n+n-}
\sN_+=\{(u,\ell)\in \sN: u\geq 0\}
\quad\text{and}\quad
\sN_-=\{(u,\ell)\in \sN: u< 0\}.
\end{align}
These definitions are consistent with the ones provided in \eqref{nxi+} and \eqref{nxi-}. Indeed, 
 if $\sN_-$ is given as in \eqref{nxi-}, then every pair $(u,\ell) \in \sN_-$ satisfies $u<0$ due to the fact that $0$ is a record.

We say that $\sN_+$ is $\mathcal T$-finite if for every $y>0$ there exists a finite set $\mathcal T(y)$ that verifies \eqref{eqRy}.
Analogously we say that $\sN_-$ is $\mathcal T$-finite if for every $y>0$ there exists a finite set $\mathcal T(y)$ that verifies
\begin{equation*}
\mathcal T(y)= \{(u,\ell)\in \sN_-: u\geq -y-2\sum_{(\tu,\tilde \ell) \in \mathcal T(y)}(\tilde \ell-\ell)^+\}.
\end{equation*}
Finally we say $\sN$ is $\mathcal T$-finite if both 
$\sN_+$ and $\sN_-$ defined in \eqref{n+n-} are $\mathcal T$-finite. Define
\begin{align}
  \label{7}
  \cN  \vcentcolon = \text{set of point configurations that are
$\mathcal T$-finite}.
\end{align}
Since we have shown that $\mathcal T(y)$ given by \eqref{defRy} verifies \eqref{eqRy} we have proved the following proposition.

\begin{proposition}
For every $\xi\in\hcW$ we have
\[
\sN[\xi]\in \cN.
\]
\end{proposition}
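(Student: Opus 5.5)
The plan is to verify $\mathcal T$-finiteness separately for $\sN_+$ and $\sN_-$, using as the candidate the set $\mathcal T(y)$ of \eqref{defRy}. For $\sN_+$ and a fixed $y>0$, set $x=r((-y)-)$. This is finite because $\xi\in\cW$ has $\liminf_{x\to+\infty}\xi(x)=-\infty$, so level $-y$ is reached, and the end of the excursion at level $-y$ is finite. Since $\eta$ has finitely many jumps in bounded intervals, $B[\xi]\cap(0,x]$ is finite, and therefore so is $\mathcal T(y)$.

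\textbf{Step 1: compute the slot space \eqref{formulahell}.} Decompose $(0,x]$ into two kinds of pieces.
\begin{itemize}
\item The records, of total Lebesgue measure $y$, since $m$ decreases at unit rate on them. Records are never in any soliton, and never in any $I_\ell(b)$ by Lemma~\ref{exc1} together with the fact that $I_\ell(b)$ stays inside its excursion.
\item The excursions contained in $(0,x]$. Because $0$ is a record, no excursion straddles $0$, and $x$ ends an excursion. Each such excursion contributes $2\sum_b(\ell_b-\ell)^+$ by \eqref{eqhelloe}.
\end{itemize}
Adding these gives \eqref{formulahell}.

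\textbf{Step 2: monotonicity of $h_\ell$, which identifies $\mathcal T(y)$.} For $b\in(0,x]$ we have $o_b\le x$, hence $h_{\ell_b}(0,o_b)\le h_{\ell_b}(0,x)$. For $b>x$ we have $o_b>x$, and we need the strict inequality $h_{\ell_b}(0,o_b)>h_{\ell_b}(0,x)$. The point needing care is that $(x,o_b]$ must contain positive $S_{\ell_b}$-measure. I would argue this using $x=r((-y)-)$: right after $x$ there is a positive-length stretch of records, or else $o_b$ is separated from $x$ by a record interval or by slot space of a larger soliton. If this fails on a null set, ties would have to be handled as in the Appendix. Granting it, $\mathcal T(y)$ consists exactly of the points coming from $b\in B[\xi]\cap(0,x]$.

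\textbf{Step 3: obtain \eqref{eqRy}.} Substitute \eqref{formulahell} into the definition \eqref{defRy}. The sum over $b\in B[\xi]\cap(0,x]$ is then a sum over $(\tu,\tilde\ell)\in\mathcal T(y)$, which is exactly \eqref{eqRy}.

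\textbf{The case $\sN_-$.} This is symmetric. Use $x=r(y)<0$, the first hitting time of level $y$, which exists because $\liminf_{x\to-\infty}\xi(x)=+\infty$. Working on $[x,0]$, the records again contribute total measure $y$, and the excursions to the left of $0$ are complete. The same steps then give the reflected identity, with $u\ge -y-2\sum(\tilde\ell-\ell)^+$.

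I expect the main obstacle to be the strict inequality in Step 2 for solitons beyond $x$, or equivalently making sure boundary and tie situations do not put extra points into the set. The rest is bookkeeping based on \eqref{1}, \eqref{eqhelloe} and Lemma~\ref{exc1}.
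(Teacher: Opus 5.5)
Your proposal is correct and follows the paper's argument essentially verbatim: the same choice $x=r((-y)-)$, the count \eqref{formulahell} (records contribute $y$, each excursion contributes \eqref{eqhelloe}), the monotonicity of $h_\ell$ that identifies $\mathcal T(y)$ with the maxima in $(0,x]$, and substitution to get \eqref{eqRy}; the paper leaves the $\sN_-$ side implicit. The strict inequality you flag, which the paper simply asserts, holds for every $\xi$ with no appeal to ties: since $r(z)\downarrow x$ as $z\uparrow -y$, we must have $\eta(x)=-1$, so some $(x,x+\delta)$ consists of records, which lie in every $S_\ell$ and precede $o_b$ for all $b>x$.
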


\subsection{The excursion associated to a slot diagram}
\label{sde2}

qOur goal is to build a bijection between walks and point configurations. 
We begin by showing how to construct the excursion associated to a slot diagram. 

In order to obtain the diagram for a single excursion we may assume that $\eps(0)=0$, or otherwise shift it so that this condition is satisfied, and extend the path outside the excursion with $\xi'=-1$. Let us denote by $\cE_o(n)$ the set of excursions starting at the origin, $\so(\vep)=0$, that have exactly $n$ maxima. We codify $\vep\in\cE_o(n)$ via the alternating minima and maxima, $\vep=(\ua_n, \ub_{n})=(a_1,\dots, a_n, b_1,\dots, b_n)$, so that
\begin{align}\label{Rdef}
  \cE_o(n)&\vcentcolon =\Bigl\{(\ua_n, \ub_n)\,: 0=a_1<b_1<a_2<\dots<a_n<b_n,\\
  &\qquad\qquad\qquad\,\sum_{j=1}^k(b_j-a_{j})> \sum_{j=1}^k(a_{j+1}-b_{j}),\, 1\le k<n-1\Bigr\}. \notag
\end{align}
The endpoint of the excursion satisfies $ \se(\vep)=2\sum_{j=1}^n(b_j-a_{j})$.

We will denote the set of excursions $\vep$ starting at the origin and such that $\vep(0)=0$ by
\begin{align}
 \cE_o&\vcentcolon = \cup_{n\ge1} \cE_o(n), \label{cEo}, 
\end{align}
and, as before, use $A[\vep]$ and $B[\vep]$ to denote the sets of minima and maxima of $\vep$.

The set of slot diagrams is denoted by
 \begin{align}
 \cN_o&\vcentcolon = \Bigl\{\sM\subset \RR\times \RR_+: \#\sM<\infty; \, 
   0\le u \le 2\sum_{(\cku, \ckl)\in\sM}(\ckl-\ell)^+, \, (u, \ell)\in\sM\Bigr\};\label{N0}\\
  \cN_o(n)&\vcentcolon = \Bigl\{\sM\in\cN_o: \#\sM=n\Bigr\}.\label{Nn2}
\end{align}
Denoting $\ell_{\max}\vcentcolon =\max\{\ell:(u, \ell)\in\sM\}$, we have $(0, \ell_{\max})\in\sM$ for $\sM\in\cN_o$.

\begin{lemma}[Space of slot diagrams]
 \label{ssd1}
 The set of slot diagrams of the excursions starting at $0$ is contained in $\cN_o$,
 \begin{align}
 \label{en1}
 \{\sN[\vep]:\vep\in\cE_o\}\subset \cN_o.
 \end{align}
\end{lemma}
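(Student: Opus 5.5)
We have to show that for $\vep\in\cE_o(n)$ the diagram $\sM=\sN[\vep]$ is finite and that every point $(u,\ell)\in\sM$ satisfies $0\le u\le 2\sum_{(\cku,\ckl)\in\sM}(\ckl-\ell)^+$. The plan is to read both facts off the slot space formula \eqref{eqhelloe}, applied to the excursion $\vep$ extended by $\xi'=-1$ outside $(\so,\se]=(0,\se]$.

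\textbf{Finiteness and $u\ge 0$.} The diagram $\sM$ is finite because $\vep$ has exactly $n$ maxima, so $\#\sM=n$. The origin $0=\so$ is a record of the extended walk. Every maximum $b$ lies in $(0,\se]$, so $b>0$, and only the formula \eqref{nxi+} is used. Its first coordinate is $u_b=h_{\ell_b}(0,o_b)$, which is a Lebesgue measure and hence $u_b\ge0$.

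\textbf{Upper bound.} Fix a maximum $b$ and set $\ell=\ell_b$. Since $\gamma_b\subset(0,\se]$, we have $o_b\le\se$. Because $h_\ell(0,\cdot)$ is nondecreasing, $u_b=h_\ell(0,o_b)\le h_\ell(0,\se)$. By \eqref{eqhelloe}, $h_\ell(0,\se)=2\sum_{b'\in B[\vep]}(\ell_{b'}-\ell)^+$. Each maximum $b'$ contributes the point $(u_{b'},\ell_{b'})$ to $\sM$, counted with multiplicity since $\sM$ is a multiset. The sum over $b'\in B[\vep]$ is therefore exactly the sum over $(\cku,\ckl)\in\sM$, which gives the required inequality.

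\textbf{Main obstacle: boundary of the excursion.} The delicate step is checking that \eqref{eqhelloe} and the definition of $S_\ell$ match at the ends of the interval. The points $x\le0$ and $x>\se$ are records of the extended walk. They lie in $\RR_\ell$, but they should not carry slot mass inside $[0,o_b]$. To see this, note that the slot space inside $(0,\se]$ comes only from the intervals $\gamma_{b'}\setminus I_\ell(b')$ for the $b'$ with $\ell_{b'}>\ell$. These pieces have total length $2(\ell_{b'}-\ell)^+$ each, because the sets $I_\ell(b')$ are pairwise disjoint by Lemma~\ref{exc1}. Since $o_b>0$, the segment $[0,o_b]$ meets no record other than the single point $0$, which has measure zero. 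So the estimate $h_\ell(0,o_b)\le h_\ell(0,\se)$ stays within the excursion, and no contribution from records enters, unlike the term $y$ in \eqref{formulahell}.

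I would also confirm $o_b\ge 0$ directly from $\gamma_b\subset(0,\se]$, which follows from \eqref{1}. This is what makes $h_\ell(0,o_b)$ well defined as written in \eqref{nxi+}.
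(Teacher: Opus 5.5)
Your proof is correct and is the paper's argument: $o_b\le\se$, monotonicity of $h_{\ell_b}(0,\cdot)$, and the slot-space identity \eqref{eqhelloe}, with your extra checks (finiteness, $u_b\ge0$, matching the sum over $B[\vep]$ with the multiset $\sM$, and no record mass inside $[0,\se]$) filling in details the paper leaves implicit. One harmless slip: you wrote $o_b>0$ where only $o_b\ge0$ holds, since the tallest soliton starts at the origin of the excursion and gives the point $(0,\ell_{\max})$; this does not affect your conclusion about $[0,o_b]$.
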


\begin{proof}
Let $(0,\se]$ be the domain of the excursion.
Given $b\in B[\eps]$ we have $o_b\leq e$, so for $\ell=\ell_b$ it must hold that $h_\ell(0,o_{b})\leq h_\ell(0,\se)$.
By \eqref{eqhelloe} it follows that the point $(h_\ell(0,o_{b}),\ell)$ verifies the inequality in the definition of 
$\cN_o$, that is \eqref{N0}.
We conclude that $\sN[\vep]\in \cN_o$.
\end{proof}

Given a slot diagram $\sM\in \cN_o(n)$, let $\ell_{\max}\vcentcolon = \max \{\ell: (u,\ell)\in \sM\}$, and $\sg=\sg[\sM]$ be the function $\sg:\R_+\to\R_+$ defined as
\begin{align} 
\sg(y)&\vcentcolon = 2\sum_{(u, \ell)\in\sM} (\ell-y)^+, \quad 0\le y\le \ell_{\max},\label{sca7}
\end{align}
the total $y$-slot space in the slot diagram $\sM$. 

Fig.~\ref{inverse44} depicts the slot space created by the solitons of the excursion in Fig.~\ref{inverse1}.
For example, $\sg(\ell_2)$ is the length of the red segment.
Each maximum $b$ in the excursion $\vep$ contributes with $2(\ell_b-\ell)^+$ additional $\ell$-slot space; the union over $\ell<\ell_b$ of the $\ell$-slot spaces forms a triangle of height $\ell_b$ and base $2\ell_b$; the picture shows the triangles associated to the 3 solitons in the excursion. The soliton associated to $b_{\max}$ is mapped to $(0, \ell_{\max})$ and each remaining point $(u, \ell)\in\sN[\vep]$ belongs to the disjoint union of triangles with heights larger than $\ell$. The upper boundary of the region in Fig.~\ref{inverse44} if given by the graph of $\sg$.

\begin{figure}
\centering
\begin{tikzpicture}[scale=0.4, thick]

	\node[dot, blue,label={[blue]left:{$\ell_1$}}] at (0,9) {};	
	\draw [blue, fill=blue!30] (0,9) -- (18,0) -- (0,0) -- cycle;

	\node[red,left] at (0,4) {$\ell_2$};    
    \node[dot, red] at (1,4) {};	
    \draw[red, ultra thick] (0,4) -- (10,4) ;
    \draw [red, fill=red!30] (10,4) -- (26,0) -- (18,0) -- cycle;

    \node[Green,left] at (0,2) {$\ell_3$};    
    \node[dot, Green] at (13,2) {};	
    \draw[Green, ultra thick] (0,2) -- (18,2) ;
    \draw [Green, fill=Green!30] (18,2) -- (30,0) -- (26,0) -- cycle;

	\draw (0,-1) -- (0,10);
	\draw (-1,0) -- (31,0);

\end{tikzpicture}
\caption{Slot diagram of the excursion in Fig.~\ref{inverse1}, and the slot space contributed by each soliton. }\label{inverse44}
\end{figure}

\begin{definition}
 [Mapping slot diagrams to excursions] \label{17}
\rm Given $\sM\in\cN_o(n)$, order its points by decreasing $\ell$-th coordinate, $\sM=\{(u_1, \ell_1), \dots, (u_n, \ell_n)\}$, with $\ell_{\max}=\ell_1\geq\dots\geq \ell_n$, and $u_1=0$. We construct an excursion $\vep[\sM]$ iteratively.

First step. Insert an $\ell_1$-soliton $\gamma_1$ at the origin to obtain an excursion $\vep^1[\sM]$ with one maximum $b_1=\ell_1$ and a minimum $a_1=0$. The left endpoint of $\gamma_1$ is denoted $o_1$, and in this case it matches $a_1$, $0_1=a_1$.
Fig.~\ref{e1} illustrates $\vep^1[\sM]$.

{
\centering
 \includegraphics
 [width=.80\textwidth, trim= 0 10mm 0 4mm, clip]
 {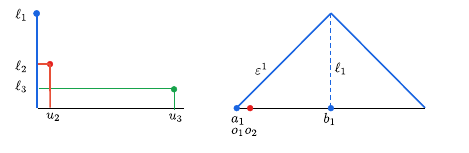}
 \captionof{figure}{Left: slot diagram $\sM$ with 3 points. Right: $\vep^1[\sM]$ obtained by inserting the blue soliton with maximal size $\ell_1$ at the origin. The red point $o_2$ is the insertion point of the second soliton. } \label{e1}
}
Recursive step. Suppose that for $k<n$ we have constructed an excursion $\vep^k$ with non necessarily ordered minima $A^k=\{a^k_1, \dots, a^k_{k}\}$ and maxima $B^k=\{b^k_1, \dots, b^k_{k}\}$, associated to points $(u_1,\ell_1),\dots, (u_k,\ell_k)$ in $\sM$ with decreasing $\ell$-th coordinate.

Denote by $(u,\ell)=(u_{k+1}, \ell_{k+1})$, and $h^k_\ell\vcentcolon = h_\ell[\vep^k]$, see \eqref{dhs1} for the definition of the latter. We insert a new soliton of height $\ell$ at the point $\so_{k+1}$ that is the solution $\so$ of  $h^k_\ell(0,\so) =u$. This amounts to shifting  the current extrema that fall to the right of $\so$ by $2\ell$ and adding a new maximum at $\so+\ell$ and a new minimum at $\so$ or $\so+2\ell$, in such a way that the resulting extrema are consistent with a path. More precisely,
\begin{gather}
  a^{k+1}_i \vcentcolon = a^k_i + 2\ell\, \one\{a^k_i>\so\},\quad
  b^{k+1}_i \vcentcolon = b^k_i + 2\ell\, \one\{b^k_i>\so\},\quad i\le  k,\\[2mm]
  a^{k+1}_{k+1}\vcentcolon = \so + 2\ell \,\one\{(\vep^k)'(\so)=1\}, \quad
  b^{k+1}_{k+1} \vcentcolon = \so+ \ell.
\end{gather}
Here $(\vep^k)'(\so)$ is the derivative of the path $\vep^k$ at the point $\so$. If $(\vep^k)'(\so)=1$, then $o$ lies between a minimum $a\in A^k$ and a maximum $b\in B^k$, and then the new minimum $a^{k+1}_{k+1}=\so+2\ell$; this is the case of the red soliton in Fig.~\ref{e2}. Otherwise, $(\vep^k)'(\so)=-1$, and the minimum associated to $b$ is placed at the left endpoint of the interval, $a^{k+1}_{k+1}=o$; this is the case of the green soliton in Fig.~\ref{e3}. 

Define $\vep^{k+1}$ as the unique excursion with extrema $A^{k+1},B^{k+1}$ and $\vep[\sM]\vcentcolon = \vep^n$.

The excursion in Fig.\/\ref{e2} is $\vep^2[\sM]$ for $\sM$ in Fig.~\/\ref{e1}, while $\vep^3$ is depicted in Fig.~\/\ref{e3}.
\end{definition}

{
\centering
 \includegraphics
 [width=.9\textwidth, trim={5mm 5mm 0 6mm}, clip ]
 {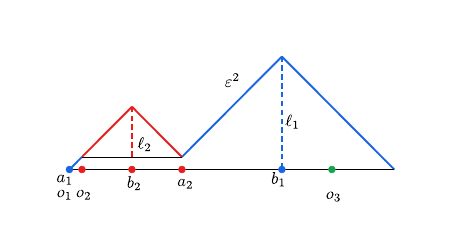}
 \captionof{figure}{Excursion $\vep^2$, obtained by placing a maximum $b_2=o_2+\ell_2$ and a minimum $a_2=o_2+2\ell_2$, and shifting the graph  of $\vep^1$ that lies to the right of $o_2$ by $2\ell_2$. The green dot is the bearer $o_3$ of the next soliton. We have dropped the superindices on the extrema.} \label{e2}
}
{
\centering
 \includegraphics
 [width=.81\textwidth, trim={0 0 0 6mm}, clip ]
 {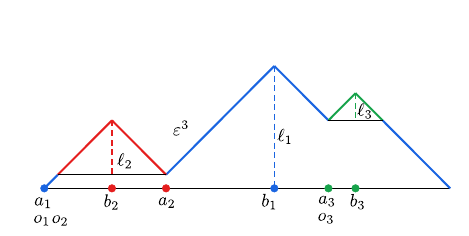}
 \captionof{figure}{Excursion $\vep^3$, obtained from $\vep^2$ by placing a maximum $b_3=o_3+\ell_3$ and a minimum $a_3=o_3$, and shifting the graph of $\vep^2$ that lies to the right of $o_3$ by $2\ell_3$. We have dropped the superindices on the extrema.} \label{e3}
}

When reconstructing the excursion, we add the solitons in decreasing order of height, starting from the largest. During the identification of solitons in Subsection \ref{subsecsol}, on the other hand, we identify them in increasing order of height, from the smallest to the largest, as shown in \eqref{orderedell}.
The intermediate paths considered in both processes are identical. We have

\begin{proposition}
  \label{p35}
  The map $\sM\mapsto\vep[\sM]$ is the inverse of the map $\vep\mapsto\sN[\vep]$,
 \begin{align} \label{inv9}
        \cE_o  & \longleftrightarrow    \cN_o \\
        \vep &\longmapsto \sN[\vep] \notag \\
        \vep[\sM] &\mapsfrom \sM \notag
\end{align}
\end{proposition}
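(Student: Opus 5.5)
We prove both compositions are the identity, $\sN[\vep[\sM]]=\sM$ for $\sM\in\cN_o$ and $\vep[\sN[\vep]]=\vep$ for $\vep\in\cE_o$, by induction on the number of solitons. The induction is driven by the remark preceding the statement: identification removes solitons in increasing height order, while reconstruction inserts them in decreasing order, and the intermediate paths should coincide. Concretely, for $\vep\in\cE_o(n)$ with solitons $\gamma_1,\dots,\gamma_n$ ordered as in \eqref{orderedell}, let $\vep_{(k)}$ be the excursion obtained by restricting $\vep$ to $(R_{n-k},d_{n-k})$, i.e. keeping only the $k$ tallest solitons and reparametrizing. The claim is that $\vep_{(k)}=\vep^k[\sN[\vep]]$ for every $k$, and symmetrically for the diagram side.

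\emph{Step 1: identification commutes with removal of the shortest soliton.} Let $\vep'$ be $\vep$ with the shortest soliton $\gamma_1$ (height $\ell_1$) cut out. By the inductive claim proved in Subsection~\ref{siw3}, the identification algorithm on $\vep'$ produces exactly the remaining solitons $\gamma_2,\dots,\gamma_n$, images under the collapse map, with the same heights. For $\ell\ge\ell_1$ the sets $\RR_\ell$ coincide, so $d_\ell$ and $I_\ell(b)$ agree for $b\neq b_1$. Hence $S_\ell$ and $h_\ell$ are unchanged for $\ell\ge \ell_1$, except in the case $\ell=\ell_1$ with ties, which follows the tie convention. It follows that $\sN[\vep']=\sN[\vep]\setminus\{(u_1,\ell_1)\}$, and the point $(u_1,\ell_1)$ satisfies $h^{\vep'}_{\ell_1}(0,o')=u_1$, where $o'$ is the image of $o_{b_1}$ in $\vep'$.

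\emph{Step 2: reinsertion recovers the excursion.} We must check that inserting a soliton of height $\ell_1$ into $\vep'$ at the point $\so$ solving $h^{\vep'}_{\ell_1}(0,\so)=u_1$, following Definition~\ref{17}, gives back $\vep$. This has two parts.
\begin{itemize}
\item[(a)] The bearer $o_{b_1}$ of $\gamma_1$ in $\vep$ projects to a point of $S_{\ell_1}[\vep']$. The soliton $\gamma_1$ lies in $\RR_{\ell_1}$ minus the $I_{\ell_1}$ intervals of taller solitons, because the run $(a_1,b_1)$ or $(b_1,a_1)$ is shortest, so by the argument of Lemma~\ref{exc1} its center is at $d$-distance at least $\ell_1$ from the taller maxima. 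Hence $\so$ is determined uniquely, since $h$ is strictly increasing on $S_{\ell_1}$.
\item[(b)] The rule placing the new minimum at $\so$ or $\so+2\ell_1$ according to the sign of $(\vep')'(\so)$ matches $\vep$. In $\vep$, if $\gamma_1$ sits on an increasing stretch, its minimum is at the right end; if it sits on a decreasing stretch, at the left end, since $b_1-x=a$ in \eqref{sol6}.
\end{itemize}
By induction, $\vep[\sN[\vep]]=\vep$.

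\emph{Step 3: the converse.} For $\sM\in\cN_o$, show that $\vep[\sM]\in\cE_o$ and that identification on $\vep[\sM]$ returns the inserted solitons. The key point is that a soliton of height $\ell_{k+1}\le\ell_j$ ($j\le k$) inserted at a slot point, one outside all $I_{\ell_{k+1}}(b)$, never creates a run shorter than $\ell_{k+1}$ except its own. Therefore, when the algorithm runs on $\vep^{k+1}$, the last inserted soliton is identified first (it is the shortest) and removing it yields $\vep^k$. The constraint $u\le \sg(\ell)$ from \eqref{N0}, together with \eqref{eqhelloe}, guarantees that $\so$ exists inside the excursion.

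The main obstacle is Step 3: verifying that insertion at a slot point preserves the claim ``the shortest run belongs to the last inserted soliton''. One must handle both the case where the insertion point splits an existing run and the case where it lands next to an extremum, and equal heights require the ordering convention (left endpoints) to be checked against the tie-breaking rules of the Appendix.
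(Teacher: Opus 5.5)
Your proposal takes the same route as the paper. The paper justifies Proposition~\ref{p35} only by remarking that identification removes solitons in increasing height while Definition~\ref{17} inserts them in decreasing height, so the intermediate paths coincide; your induction is a more detailed version of that remark. Taking the insertion point $\so$ in $S_\ell$ also resolves the non-uniqueness in solving $h^k_\ell(0,\so)=u$.

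One step needs tightening, the tie case you already flag. With equal heights, the claim that the last inserted soliton is identified first is false as stated: the left-endpoint rule picks the leftmost run of length $\ell$, which can belong to an earlier-inserted soliton of the same height. To close this, use that insertions of equal-height solitons commute (as the Appendix asserts), or order the equal-height points in Definition~\ref{17} so that the one the tie-break selects first is inserted last.
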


\subsection{From point configurations to walks}
\label{ipc1}

We now show how to build a walk from a point configuration by joining a chain of excursions. 

Recall the definition of the set $\cN$ of $\mathcal{T}$-finite point configurations, \eqref{7}. We construct a map from $\cN$ to  $\hcW$, the space of walks with a record at the origin.
The first step is to partition $\sN\in\cN$ into subsets associated to slot diagrams $\sM_i\in\cN_o$. We recover the walk by concatenating the excursions $\vep(\sM_i)$ and inserting intervals of records between them.

Let $\sN\in\cN$, $\sN=\sN_-\cup\sN_+$ with
\begin{align}
 \sN_+ &\vcentcolon =\{(u, \ell)\in\sN: u\ge0\}, & \sN_- &\vcentcolon =\{(u, \ell)\in\sN: u<0\}, \label{sn+-}\\
 \intertext{and define the sets of point configurations}
  \cN_+&\vcentcolon = \{\sN_+:\sN\in\cN\},&\cN_-&\vcentcolon = \{\sN_-:\sN\in\cN\}.\label{cn+-}
\end{align}

\begin{definition}
  [The leftmost slot diagram of a positive point configuration]  \label{18}
 
\rm Given $\sN\in\cN_+$ we define the leftmost slot diagram $\sM$ contained in $\sN$ by means of a recursive argument.
Denote $\su=\su[\sN]$ the horizontal coordinate of the leftmost point in $\sN$,
\begin{align}
 \label{sca1}
 \su\vcentcolon = \min\bigl\{u:(u, \ell)\in\sN\bigr\};\qquad (u_1, \ell_1)\vcentcolon =\text{ point realizing the minimum}.
\end{align}
If there is more than one point that realizes the minimum we pick the one with the larger $\ell$. Note that $\su=u_1$, for clarity in the inductive step we keep both notations.
Since $\cN$ is $\mathcal T$-finite, the set $\cN\cap \{(u,\ell): u\leq x \}$ is finite for every $x>0$ and therefore there is a minimum.

First step. Let $\sM^1= \{(u_1,\ell_1)\}$.

Recursive step.  Assume we have defined $\sM^k=\{(u_1, \ell_1), \dots, (u_k, \ell_k)\}$.
If there is no point $(\cku, \ckl)\in\sN\setminus \sM^k$ satisfying 
\begin{align}
\cku-\su\le \tsum_{i=1}^k2(\ell_i-\ckl),
 \label{sca3}
\end{align}
then we stop and define the slot diagram $\sM\in\cN_o(k)$ by
\begin{align}
 \label{sca4}
 \sM\vcentcolon =\{(u_1-\su, \ell_1), \dots, (u_k-\su, \ell_k)\}.
\end{align}
Otherwise, define $\sM^{k+1}\vcentcolon = \sM^k\cup\{(u_{k+1}, \ell_{k+1})\}$, where $(u_{k+1}, \ell_{k+1})$ is the point satisfying \eqref{sca3} with maximal $\ell$. Iterate the recursive step.

Since $\sN$ is $\mathcal T$-finite, the iteration ends in a finite number of steps.
In fact, we can see inductively that \eqref{sca3} implies that every point in $\sM^k$ belongs to $\mathcal T(\su)$, which is finite.

When the algorithm stops, we have defined the map from $\cN_+$ to $\R_+\times\cN_o$ given by
\begin{align}
 \sN\mapsto (\su[\sN], \sM[\sN])\vcentcolon = \text{as defined in \eqref{sca1} and \eqref{sca4}}.
 \label{sc12}
\end{align}
$\sM[\sN]$ is called the leftmost slot diagram in $\sN$, with insertion point~$\su[\sN]$.  
\end{definition}

\begin{definition}
  [The slot diagrams of a positive point configuration]\label{d1}
 \rm  Let $\sN\in\cN_+$. Proceed recursively. Denote $\sN_0 \vcentcolon =\sN$, $\su_0\vcentcolon = 0$, $\sM_0\vcentcolon = \emptyset$, and a function $\sg_0(y) \equiv 0$ for all $y> 0$. 

Recursive step. Suppose that $\sN_{k},\su_{k},\sM_{k}$ are defined.
See Fig.~\ref{Mk}.
 For $k\ge1$ let $\sg_k\vcentcolon =\sg[\sM_k]$,  the right boundary of the slot diagram $\sM_k$, as defined in \eqref{sca7}. Then, use \eqref{sc12} to define
\begin{align}
 (\su_{k+1}, \sM_{k+1})&\vcentcolon = (\su[\sN_{k}], \sM[\sN_{k}]), \label{an1} \\
 \sN_{k+1} &\vcentcolon = \Bigl\{\bigl(u-\su_{k+1}-\sg_{k+1}(\ell), \ell\bigl):(u, \ell)\in\sN_{k}\setminus\sM_{k+1}\Bigr\}. \label{sc111}
\end{align}
A point $(u,\ell)$ in $\sN_{k}\setminus\sM_{k+1}$ is shifted leftwards to a point in $\sN_{k+1}$ having the same vertical coordinate~$\ell$, and horizontal coordinate $u-\su_{k+1}-\sg_{k+1}(\ell)$. 
Note that given a finite set $\mathcal T(y+\su_{k+1})$ that verifies \eqref{eqRy} for $\sN_{k}$, once we shift it as in \eqref{sc111} we get a finite set $\mathcal T(y)$ that verifies \eqref{eqRy} for $\sN_{k+1}$.
Therefore we inductively get that $\sN_{k}$ is $\mathcal T$-finite.

Iterate the recursive step to obtain a map from $\sN\in \cN_+$ to a sequence of slot diagrams and insertion points 
\begin{align}
  \sN\mapsto  (\su_k[\sN], \sM_k[\sN])_{k\ge 1}.\label{250}
\end{align}
\end{definition}

\begin{figure}

    \begin{subfigure}[b]{\textwidth}
\centering
\begin{tikzpicture}[scale=0.2, thick]

    \draw[ultra thick] (0,0) -- (2,-2);
    
\begin{scope}[shift={(2,-2)}]
    \draw[blue, ultra thick] (0,0) -- (5,5) -- (9,1) -- (17,9) -- (21,5) -- (23,7) -- (30,0);
    \draw[red, ultra thick] (1,1) -- (5,5) -- (9,1) ;
    \draw[Green, ultra thick] (21,5) -- (23,7) -- (25,5);
\end{scope}

    \draw[ultra thick] (32,-2) -- (35,-5);
	\draw[orange,ultra thick] (35,-5) -- (40,0)-- (44,-4) -- (46,-2) -- (47,-3) -- (49,-1)-- (53,-5);  
	\draw[violet,ultra thick] (44,-4) -- (46,-2) -- (47,-3) -- (49,-1)-- (52,-4);   
	\draw[yellow,ultra thick] (45,-3) -- (46,-2) -- (47,-3);    
	\draw[ultra thick] (53,-5) -- (55,-7);

\end{tikzpicture}

    \caption{A walk with identified solitons.}
    \end{subfigure}

    \begin{subfigure}[b]{\textwidth}
\centering
\begin{tikzpicture}[scale=0.2, thick]

\draw (-1,0) -- (55,0);
\draw (0,-1) -- (0,10);

\begin{scope}[shift={(2,0)}]
	\node[dot, blue] at (0,9) {};	
	\node[dot, red] at (1,4) {};	
	\node[dot, Green] at (13,2) {};
	 
	 \draw (0,9) -- (10,4) -- (18,2) -- (30,0);	
	 
	 \begin{scope}[shift={(3,0)}]
	 	\node[dot, orange] at (8,5) {};
	 	\node[dot, violet] at (17,3) {};
	 	\node[dot, yellow] at (32,1) {};
	 \end{scope}	   
\end{scope}	    

\end{tikzpicture}

    \caption{The point configuration with the delimitation corresponding to $\sM_1$ mark in black.}
    \end{subfigure}

    \begin{subfigure}[b]{\textwidth}
\centering
\begin{tikzpicture}[scale=0.2, thick]

\draw (-1,0) -- (55,0);
\draw (0,-1) -- (0,10);

\begin{scope}[shift={(3,0)}]
\draw (0,5) -- (4,3) -- (12,1) -- (18,0);	
	 	\node[dot, orange] at (0,5) {};
	 	\node[dot, violet] at (3,3) {};
	 	\node[dot, yellow] at (8,1) {};
\end{scope}	    

\end{tikzpicture}

    \caption{$\sN_2$ with the delimitation corresponding to $\sM_2$ mark in black.}
    \end{subfigure}

\caption{We show in the construction of the slot diagrams $\sM_k$ corresponding to a walk.}
\label{Mk}
\end{figure}

\begin{remark}
\label{equivTfin}\rm
We have defined $\cN$ as the set of point configurations that are $\mathcal T$-finite.
That condition implies that the construction of $\sM_k$ in Definition~\ref{d1} ends in a finite number of steps.
The points in $\mathcal T(y)$ are covered in a finite number of steps, let say $n$, at that point we get $\sum_{k=1}^n\su_k>y$. So $\sum_{k}\su_k=+\infty$.

Conversely, if the construction of $\sM_k$ ends in a finite number of steps for each $k$ and $\sum_{k}\su_k=+\infty$ we get that 
$\cN$ is $\mathcal T$-finite.
\end{remark}

\begin{definition}
  [Mapping  $\cN$ to a carrier process]\label{d8}\rm
Let $\sN\in\cN$ with $\sN=\sN_+\cup \sN_-$. Recall the construction of the excursion $\vep[\sM]$ from $\sM\in\cN_o$ in Definition \ref{17}, and let 
$\tau(\vep)$ be the length of the excursion $\vep$. We have $\tau(\vep)=\se(\vep)=\se(\vep)-\so(\vep)$ as $\so(\vep)=0$ for $\vep \in \cN_0$. Consider the sequence $(\su_k,\sM_k)_{k\ge1}$ obtained in \eqref{250} from $\sN_+\in\cN_+$. Denote $\so_0\vcentcolon =0$, $\se_0\vcentcolon =0$, and for $k\ge1$, 
\begin{align}
 \vep_k&\vcentcolon =\vep[\sM_k], \quad \se_k\vcentcolon =\se(\vep_k),\label{vep3}\\
 \so_k&\vcentcolon = \so_{k-1}+ \se_{k-1}+ \su_k, \quad k\ge1.\label{ak3}
\end{align}
For $\sN_+\in\cN_+$, we define the carrier process $\zeta_+=\zeta[\sN_+]:\RR_+\to\RR_+$ by concatenating excursions $\vep_k$ separated by intervals of lengths $\su_k$, as follows
\begin{gather}
 \label{mex1}
 \zeta(x) \vcentcolon =
 \begin{cases}
 \vep_k(x-\so_k), 
 &\so_k< x\le \so_k+\se_k, \\
  0, 
 &\so_{k-1}+\se_{k-1}< x \le \so_{k}, 
 \end{cases},\qquad k\ge1.
\end{gather}

The reflection with respect to the vertical axis of $\sN_-\in\cN_-$ is given by
\begin{align}
 \label{rfl1}
  \Reflect(\sN_-) &\vcentcolon = \{(-x, \ell):(x, \ell)\in\sN_-\} \in\cN_+.
\end{align}
For $\sN_-\in \cN_-$, we define $\zeta_-=\zeta[\sN_-]:\RR_-\to\RR_+$ by reflecting the previous construction, 
\begin{align}
 \zeta[\sN_-] &\vcentcolon = \Reflect\bigl[\zeta[\Reflect(\sN_-)]\bigr],  \label{257}
 \\
\text{where}\quad\Reflect\zeta(x)&\vcentcolon = \zeta(-x). \notag
\end{align}
The mapping from $\sN\in \cN$ to the carrier process $\zeta=\zeta[\sN]$ is given by
\begin{align}
  \label{eq:5}
  \zeta[\sN](x) =
  \begin{cases}
    \zeta[\sN_+](x),&x\ge 0\\
    \zeta[\sN_-](x),&x<0.
  \end{cases}
\end{align}
\end{definition}

\begin{definition}
  [Mapping $\cN$ to $\cW$]\label{d9}
  To construct the walk $\xi[\sN]$ we map $\sN\in\cN$ to the carrier $\zeta= \zeta[\sN]$ using Definition \ref{d8} and then $\xi[\sN]=\xi[\zeta]$ using the mapping \eqref{4}.
\end{definition}

Given $\xi\in \hcW$ and $\sN[\xi]$ the associated point process, the set $\sM_i[\sN[\xi]]$ is the slot diagram of $\vep_i[\xi]$, the $i$-th excursion of $\xi$. The value $\su_i$  is the length of the interval of records between the $(i-1)$-th and $i$-th excursions. See Fig.~\/\ref{3}.
  By Proposition~\ref{p35} we obtain the following theorem. 

\begin{theorem}
The following functions
    \begin{alignat*}{2}
        \cN  & \longleftrightarrow    \hcW  \\
        \sN &\longmapsto \xi[\sN] \\
        \sN[\xi] &\mapsfrom \xi \\
    \end{alignat*}
are inverse of each other.
\end{theorem}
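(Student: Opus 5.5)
We prove both compositions are identities by reducing everything to Proposition~\ref{p35}, which already gives the bijection between single excursions $\cE_o$ and slot diagrams $\cN_o$. The reduction rests on one fact: the decomposition of Definition~\ref{d1} recovers, block by block, the excursion structure of the walk.

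\emph{Step 1: $\sN[\xi[\sN]]=\sN$.} Take $\sN\in\cN$ and treat $\sN_+$ first; $\sN_-$ follows by the reflection \eqref{rfl1}, \eqref{257}. Let $(\su_k,\sM_k)_{k\ge1}$ be as in \eqref{250}, and let $\xi=\xi[\sN]$.
\begin{itemize}
\item By Remark~\ref{equivTfin} we have $\sum_k\su_k=\infty$, so the concatenation \eqref{mex1} defines $\zeta$ on all of $\RR_+$. By \eqref{4}, $\xi$ then decreases along the record intervals and has excursions $\vep_k=\vep[\sM_k]$ placed at $\so_k$, so $\xi\in\hcW$.
\item By Proposition~\ref{p35}, the solitons of $\vep_k$ have slot coordinates, measured inside the excursion, equal to the points of $\sM_k$.
\item It remains to translate these into the global coordinate $h_{\ell_b}(0,o_b)$. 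For a soliton $b$ in the $k$-th excursion, use \eqref{formulahell}: the $\ell$-slot space to the left of $\so_k$ equals the record length $\su_1+\dots+\su_k$ plus $\sum_{j<k}\sg_j(\ell)$, where $\sg_j(\ell)$ is given by \eqref{eqhelloe} and \eqref{sca7}.
\item This is exactly the cumulative shift that \eqref{sc111} subtracts at each stage. By induction on $k$, the point of $\sN_+$ that became a point of $\sM_k$ is therefore $\bigl(u+\sum_{j\le k}\su_j+\sum_{j<k}\sg_j(\ell),\ell\bigr)$ with $(u,\ell)\in\sM_k$, and this is the slot coordinate of the corresponding soliton of $\xi$.
\end{itemize}
Hence $\sN_+[\xi]=\sN_+$.

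\emph{Step 2: $\xi[\sN[\xi]]=\xi$.} Take $\xi\in\hcW$ with excursions $\vep_1,\vep_2,\dots$ to the right of $0$, separated by record intervals of lengths $v_1,v_2,\dots$. We must show that Definitions~\ref{18} and~\ref{d1}, applied to $\sN_+[\xi]$, return $\su_k=v_k$ and $\sM_k=\sN[\vep_k]$ (the slot diagram of $\vep_k$ after shifting it to start at $0$). Once this holds, Proposition~\ref{p35} gives $\vep[\sM_k]=\vep_k$, and \eqref{mex1} together with \eqref{4} rebuild $\zeta[\xi]$ and then $\xi$.

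We argue by induction on $k$, using the shifted configuration $\sN_k$. The case $k=1$ contains all the difficulty.
\begin{itemize}
\item \emph{Leftmost point.} The first excursion's highest soliton has $o_b=\so_1$, so its coordinate is $h(0,\so_1)=v_1$. Every other point has $u\ge v_1$, since its $o_b\ge\so_1$ and there are no excursions before $\so_1$. Ties are resolved by taking the larger $\ell$, which selects $\ell_{\max}$ of $\vep_1$. This is consistent with $(0,\ell_{\max})\in\sM$ for $\sM\in\cN_o$.
\item \emph{Points of $\vep_1$ are absorbed.} The greedy rule \eqref{sca3} adds points in decreasing order of $\ell$. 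We show by induction that each added point belongs to $\vep_1$ and that all of them get added. For a soliton of $\vep_1$ with height $\ckl$, \eqref{eqhelloe} applied to the already collected larger solitons shows $\cku-\su\le 2\sum_i(\ell_i-\ckl)$. This is the $\cN_o$ inequality of Lemma~\ref{ssd1}; it restricts to the larger solitons because the slot space at level $\ckl$ only counts solitons higher than $\ckl$.
\item \emph{Points of later excursions are rejected.} A soliton in a later excursion has $o_b\ge \so_2=\so_1+\se_1+v_2$ with $v_2>0$. Its coordinate therefore strictly exceeds $v_1+\sg_1(\ckl)$, so it fails \eqref{sca3}. Points of $\vep_1$ itself cannot be left out, because they satisfy the inequality and the algorithm only stops when no such point remains.
\end{itemize}
After the shift \eqref{sc111}, $\sN_1$ is precisely the slot decomposition of the walk with the first record interval and first excursion removed, so the induction proceeds.

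The main obstacle is the rejection step: it needs the strict separation $v_2>0$, together with careful handling of ties in $\ell$ and multiplicities in $\sN$, as in the Appendix. The inclusion step is also delicate, because \eqref{sca3} is checked against the partial set $\sM^k$ rather than against the whole of $\sM$. Both are handled by the monotonicity \eqref{orderedell}, which guarantees that a height-$\ckl$ soliton's slot space depends only on solitons of larger height.
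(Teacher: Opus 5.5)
Your proposal is correct and follows the paper's route. The paper's proof is the one-line observation that the diagrams $\sM_i[\sN[\xi]]$ and gaps $\su_i$ produced by Definition~\ref{d1} are exactly the slot diagrams of the excursions of $\xi$ and the lengths of the record intervals between them, followed by an appeal to Proposition~\ref{p35}. You supply the bookkeeping the paper leaves implicit: the cumulative shift $\sum_{j\le k}\su_j+\sum_{j<k}\sg_j(\ell)$ via \eqref{formulahell}, and the inclusion/rejection argument showing that the greedy rule \eqref{sca3} collects precisely the solitons of the first excursion.
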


{	\centering
	
  \includegraphics[width=.7\textwidth]{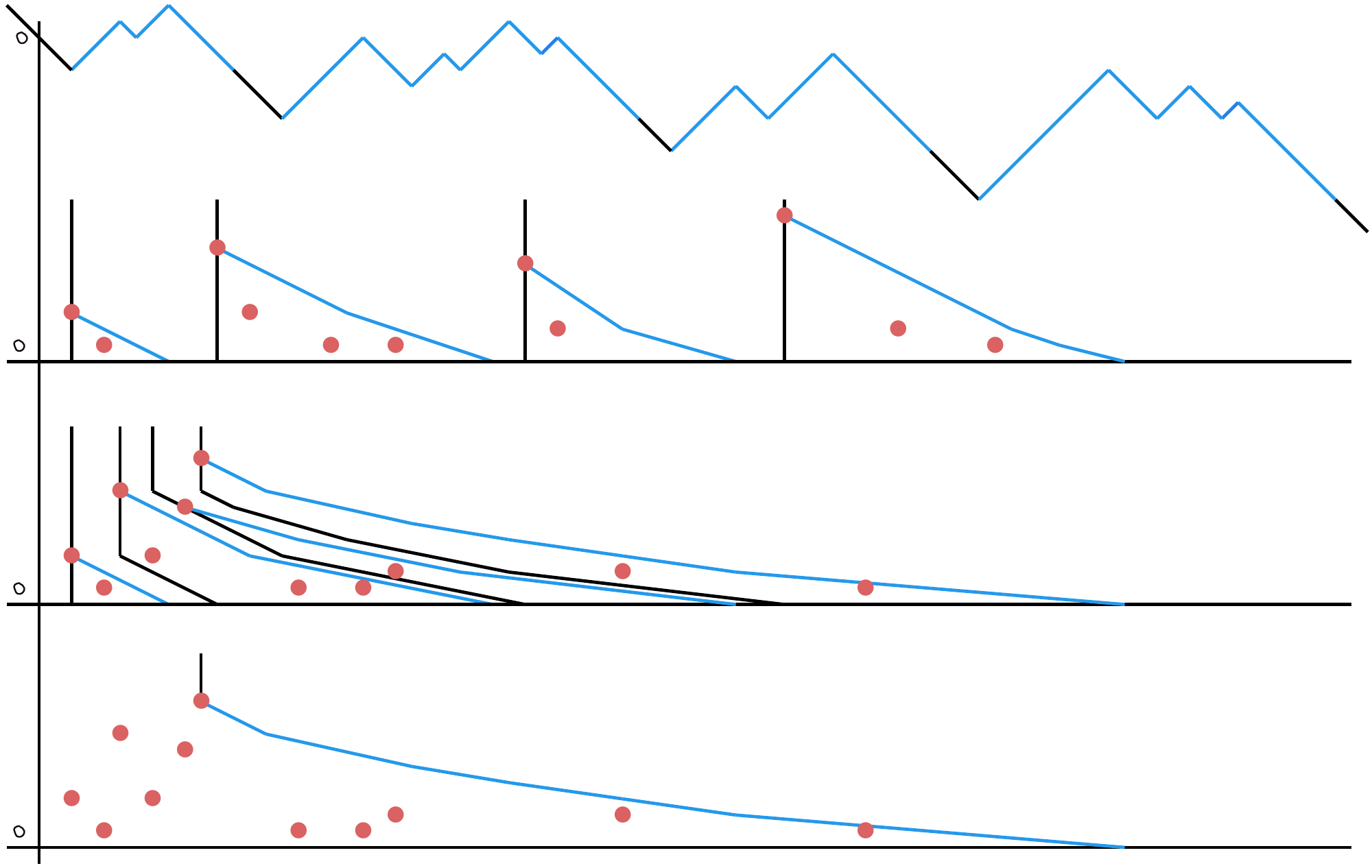}
  \captionof{figure}{From walks to point configurations. \label{3}}
  \par
}

\section{Random distributions of walks}
\label{s40}

In this section we introduce two families of measures on trajectories. The first family consists of product measures on the decomposition of the walk into excursions, which are separated by iid intervals of records. The distribution of an excursion $\vep$ is determined by a vector of weights $(\alpha(\ell_i))_{i}$, where $(\ell_i)_i$ are the heights of the solitons in $\vep$ and the intensity $\alpha:\RR_+\to \RR_+$ satisfies some integrability conditions. We then consider another family of distributions determined by Poisson point processes with configurations in $\cN$. We finally show that under some conditions these families of measures are equivalent.

\subsection{Random excursions}
\label{2.6}

We start by defining two families of distributions on excursions in $\cE_o$.

\subsubsection{Soliton weights}
\label{2.6.1}

Recall that $\cE_o(n)$ denotes the set of excursions starting at the origin and having $n$ maxima, as defined in \eqref{Rdef}.
For $\alpha:\RR_+\to \RR_+$, define
\begin{align}
  \label{za3}
  Z_\alpha&\vcentcolon = \sum_{n=1}^\infty\int_{\cE_o(n)}\alpha(\ell_1)\dots\alpha(\ell_n)\, d \ua_n\, d\ub_{n},
\end{align}
where $\ell_1\geq \ell_2\geq  \dots \geq \ell_n$ are the soliton sizes of the excursion $(\ua_n, \ub_{n})\in\cE_o(n)$,  $d\ua_n \vcentcolon = da_2\dots da_n$, because $a_1\equiv0$, and $ d\ub_{n}\vcentcolon =db_1\dots db_n$. Define
\begin{align}\label{mua0}
  	\mathcal A&\vcentcolon =\bigl\{\alpha:\RR_+\to \RR_+\,:\, Z_\alpha<+\infty\bigr\}\,.
\end{align}
For $\alpha \in\cA$,
define the probability measure $\mu_\alpha$ on $\cE_o$ by
\begin{align}
	\label{mua1}
	\mu_\alpha(d\varepsilon)
  &\vcentcolon =\frac{1}{Z_\alpha}\, \sum_{n=1}^\infty\one\{\vep\in\cE_o(n)\}\, \alpha(\ell_1)\dots\alpha(\ell_n)\, d\ua_n\, d\ub_{n}\,.
    \end{align}
The length of $\vep\in\cE_o(n)$ is $\se(\vep)=2\sum_{i=1}^{n}\ell_i$, as $\so=0$, see \eqref{1}. The average length of an excursion distributed as \eqref{mua1} is $\int \se(\varepsilon)\,\mu_\alpha(d\varepsilon)$. We call
\begin{equation}
	\label{mua2}
	\mathcal A^+\vcentcolon =\Bigl\{\alpha\,:\,\int \se(\varepsilon)\,\mu_\alpha(d\varepsilon)< +\infty\Bigr\}\,
\end{equation}
the set of weight functions that determine distributions with finite average excursion length.
It is hard to characterize the families $\mathcal A, \mathcal A^+$ in terms of soliton weights as in (\ref{mua1},~\ref{mua2}), but we will see that they can be easily described using the slot representation of the excursions. 

\begin{example}
  [Excursions of a telegraph process]\label{teleg}\rm
Let $(\eta(t))_{t\ge0}$ be a continuous time Markov jump process on $\{-1, 1\}$ 
with rates $r(-1, 1)=\lambda_-\le \lambda_+=r(1, -1)$. The Kac telegraph process \cite{zbMATH03491985} describes a one dimensional particle travelling with constant speed in the direction given by $\eta$. Let us denote this process by $(\xi(t),\,t\ge 0)$. The typical excursion $\vep\in\cE_o$ starts with $\eta(0)=1$ and runs until the first time $\xi$ hits $0$. We get
\begin{align}
	\xi(t)=\int_0^t \eta(s)\, ds,\quad
	\vep =\bigl(\xi(t)\bigr)_{t\in[0, \se]}, \quad \text{where} \quad \se = \inf\{t>0: \xi(t)=0\}\,.\label{6}
\end{align}

We can explicitly compute the distribution of the excursions if we use that the stretches where $\xi$ has constant direction $\pm 1$ are independent exponential variables with parameter $\lambda_{\pm}$. It follows that the distribution of $\vep\in\cE_o$ is absolutely continuous with respect to the Lebesgue measure in $\cup_{n\ge1}\RR^{2n-1}_+$, with density $f(\vep)=\sum_{n\ge1}f_n(\vep)\one_{\cE_o(n)}(\vep)$, where for $\vep=(\ua_n,\ub_n)\in\cE_o(n)$ we have
\begin{align}
  f_n(\ua_n,\ub_n)
  &=
    \Bigl(\prod_{i=1}^n \lambda_+\,e^{ -(b_i-a_i)\lambda_+}\Bigr)\,
    \Bigl(\prod_{i=1}^{n-1} \lambda_-\,e^{-(a_{i+1}-b_i)\lambda_-}\Bigr)\,
    e^{ - (\se-b_n)\lambda_-} \\
  &= \Bigl(\prod_{i=1}^n \lambda_+\,e^{-\ell_i\lambda_+} \Bigr)\,
    \Bigl(\prod_{i=1}^{n} \lambda_-\,e^{-\ell_i\lambda_-}\Bigr)\,\frac{1}{\lambda_-}
  \label{eq:6}\\
&=  \frac{1}{\lambda_-} \prod_{i=1}^n \alpha(\ell_i).
\end{align}
We used \eqref{eq:4} to obtain \eqref{eq:6} from the first line. Here $\alpha:\RR_+\to \RR_+$ is given by
\begin{equation}\label{zza1}
	\alpha(\ell)=\lambda_-\, \lambda_+\, e^{-\left(\lambda_-+\lambda_+\right)\ell}.
\end{equation}
Since $\lambda_-\le \lambda_+$ the random excursion $\vep$ is finite with probability one. If $\lambda_- < \lambda_+$ the process has  negative drift, and the excursion has finite mean. In any of these cases we have
\[
  \sum_{n\ge0}\int_{\cE_o(n)}f_n(\ua_n,\ub_n)\, d\ua_nd\ub_n =P(\vep\in\cE_o)=1.
\]
Hence $Z_\alpha = \lambda_-$,
and the telegraph process has excursions with
law $\mu_\alpha$, with $\alpha \in \mathcal A$ as in \eqref{zza1}. If the strict inequality holds, $\lambda_-<\lambda_+$, then $\alpha \in \mathcal A^+$.

We can construct the telegraph process as seen from a record by considering a sequence of iid excursions $(\vep_k)_{k\ge 1}$ with law $\mu_{\alpha}$, inserting an interval of length $\su_k$ between the $k-1$-th and the $k$-th excursion, $k\ge 2$, and adding an interval of length $\su_1$ between the record and the first excursion, see \eqref{mex1}-\eqref{eq:5}. The variables $(\su_k)_{k\ge 1}$ here are iid exponential random variables with parameter $\lambda_-$. This yields the carrier process $\zeta(t)$, in order to obtain the telegraph process $\xi(t)$ we use \eqref{4}. 
\end{example}

\subsubsection{First excursion of a Poisson point process}
\label{2.6.2}

We introduce a second family of distributions on $\cE_o$, which rests on the representation of an excursion as a slot diagram.

Let
\begin{align}	\mathscr Q&\vcentcolon =\Big\{q:\RR_+\to \RR_+ \text{ measurable}:\, \int_0^{+\infty}q(\ell)d\ell <+\infty\Big\}, \label{Q1}\\
  \mathscr Q^+&\vcentcolon =\Big\{q\in \mathscr Q\,:\, \int_0^{+\infty}\ell q(\ell) d\ell <+\infty\Big\}\,.\label{Q2}
\end{align}
Given $q\in \mathscr Q$, let $\sN$ be the Poisson point process on $\RR\times\RR_+$ with intensity measure $du\, q(\ell)d\ell$.

Denote by $\su$ is the distance of the leftmost point in $\sN_+$ to the $\ell$ axis, if such a leftmost point exists, and $0$ otherwise. Since $q\in \mathscr Q$, the law of $\su$ is exponential with rate 
\begin{equation}\label{L1}
\int q\vcentcolon =\int_0^\infty q(y)dy,\qquad \PP(\su>t) = e^{-t\int q},
\end{equation} 
and $\su$ is positive with probability $1$. Let $(\su[\sN_+], \sM[\sN_+])$ be as defined by the process \eqref{sc12}, where 
$\su[\sN_+]=\su$ and $\sM$ is the first slot diagram of $\sN_+$. At this point we cannot exclude the possibility that $\sM[\sN_+]$ is infinite, but the fact that $q\in \mathscr Q$ implies that the recursive step in the construction is well defined, and the number of points in $\sM\cap (\RR_+\times [a,b])$ is a.s. finite for any $0<a<b$. We show in Lemma~\ref{Tpos}  below that $\sN_+$ belongs to $\mathcal N$ and $\sM[\sN_+]$ is finite with probability $1$.

Note that $\su$ and $\sM$ are independent, as $\sN$ is a Poisson point process with an intensity that is homogeneous in the first coordinate. In particular, the coordinates of the first point $(\su, \ell_1)$ in the leftmost excursion are independent, and $\ell_1$ is sampled with law $q(\ell) d\ell/\int q$, a fact we isolate for future reference.

\begin{remark}(Law of the excursion maximum)\label{maxi}
The maximum value $\ell_1$ of the excursion $\sM$ has distribution
\begin{equation}\label{maxlaw}
 \frac{q(\ell)\,d\ell}{\int_0^{+\infty} q(\ell)d\ell}.
\end{equation}
\end{remark}

\begin{figure}[th]
\centering
\begin{tikzpicture}[scale=0.4, thick]

\draw [fill=red!30, draw=none] (0,0) -- (2,0) -- (2,10) -- (0,10) -- cycle;	

\begin{scope}[shift={(2,0)}]
	 \draw [fill=blue!30, draw=none] (0,9) -- (10,4) -- (18,2) -- (30,0) -- (0,0) -- cycle;	
	 	\node[dot] at (0,9) {};	
	\node[dot] at (1,4) {};	
	 \node[dot] at (13,2) {};
\end{scope}	    

\draw (-1,0) -- (34,0);
\draw (0,-1) -- (0,10);

\end{tikzpicture}
  
\caption{The regions $\sU$ (red) and $\sG$ (blue) for the excursion in Fig.~\ref{inverse1}.}
\label{UG}
\end{figure}

In order to identify the leftmost slot diagram $\sM$ of a configuration $\sN\in \mathcal N$, the process defined in \eqref{sc12} explores the regions 
\begin{align}
\sU&\vcentcolon =\{(z, y):0<z\le \su\}\label{sA1}\\
\sG&\vcentcolon =\{(z, y):\su\le z<\sg(y)\}\label{sA2},
\end{align}
and locates all the points in the diagram, see Fig.~\ref{UG}. Here $\sg=\sg[\sM]:\RR_+\to \RR_+$ is defined in \eqref{sca7}.

Recall the definitions \eqref{N0} and \eqref{Nn2} of the sets $\mathcal N_o$ and $\mathcal N_o(n)$. 
The Poisson process induces a measure $\nu_q$ on finite slot diagrams  
such that for $n\ge 1$ and $\sM = \{(u_1, \ell_1), \dots, (u_n, \ell_n)\} \in \mathcal N_o(n)$ 
\begin{align}
	\label{nuq4}
	\nu_q(d\uu_n\, d\uell_n) = \one_{\mathcal N_o}\left(\underline u_n, \underline \ell_n\right)\frac{q(\ell_1)\, d\ell_1}{\int q(y)dy}\, \exp\Bigl(-\int_{\sG} q(y)\, dxdy\Bigr)\, \prod_{i=2}^n q(\ell_i)\, du_i\, d\ell_i, 
\end{align}
where $d\uu_n=du_2\dots du_n$, as $\sM\in\cN_o$ implies that $u_1\equiv 0$, and $d\uell_n=d\ell_1\dots d\ell_n$. The indicator function $\one_{\mathcal N_o}\left(\underline u_n, \underline \ell_n\right)$  ensures that the coordinates determine a slot diagram. Proving that the first slot diagram determined by the Poisson point process is finite with probability $1$ is equivalent to showing that $\nu_q$ is a probability measure, see Lemma~\ref{Tpos} below.

The set $\sG$ in \eqref{sA2} is a disjoint union of triangles $(\sG_i)_i$, where $\sG_i$ has base $2 u_i$ and height $\ell_i$, see Fig.~\ref{inverse44}, hence
\begin{align}
	\label{gii7}
	\int_\sG q(y)\, dxdy = \sum_{i=1}^n \int_{\sG_i}dx\, q(y)\, dy=2\sum_{i=1}^n \int_0^{\ell_i} (\ell_i-y)\, q(y)\, dy.
\end{align}
We obtain the following factorization of $\nu_q$,
\begin{align}
	\label{nuq5}
	\nu_q(d\uu_n\, d\uell_n) &= \frac{\one_{\mathcal N_o}\left(\underline u_n, \underline \ell_n\right)}{\int q(y)dy}\, \prod_{i=1}^n \exp\Bigl(-2\int_0^{\ell_i} (\ell_i-y)\, q(y)\, dy\Bigr)\, q(\ell_i)\, d\uu_n\, d\uell_n \notag\\
	&= \frac{\one_{\mathcal N_o}\left(\underline u_n, \underline \ell_n\right)}{\int q(y)dy}\, \prod_{i=1}^n \exp\big(-2\mathcal Q(\ell_i)\big)\, \mathcal Q''(\ell_i)\, d\uu_n\, d\uell_n ,
\end{align}
where 
\begin{align}
	\label{qm7}
	Q(y)\vcentcolon =\int_0^yq(z)dz \quad \text{and} \quad \mathcal Q(y)\vcentcolon =\int_0^yQ(z)dz.
\end{align}

\subsubsection{Finite number of solitons and mean excursion length}\label{aux}

The goal of this section is to prove that for $q\in \mathscr Q$, the first excursion of the Poisson process with intensity $du\,q(\ell)d\ell$ contains a finite number of solitons, while if $q\in \mathscr Q^+$ it has finite mean length, Lemmas~\ref{Tpos} and Corollary \ref{mlength}. To do so we introduce a non homogeneous branching process related to the construction of the slot diagram.

Let $q\in \mathscr Q$. For each $\ell>0$ consider the distribution on $[0,\ell ]$ given by 
\begin{align}
	q_\ell(y)\vcentcolon =\frac{(\ell-y)q(y)\one_{\,0\leq y\leq \ell}}{\mathcal Q(\ell)}.
\end{align}
The process starts with one single node, the root, that has label $\ell>0$. If a node has label $\tilde \ell$, it produces a Poisson number of offspring with parameter $(2 \mathcal Q(\tilde \ell))$, which are assigned independent labels sampled according to $q_{\tilde \ell}$.

Let us call $p_\ell$ the probability that the process dies out. By conditioning on the number of offspring and their labels we obtain that this probability satisfies the following recursive equation
\begin{equation}\label{eqint}
	p_\ell=e^{-2\mathcal Q(\ell)}\sum_{k=0}^{+\infty}\frac{\big(2\mathcal Q(\ell)\int_0^\ell q_\ell(y)p_y dy\big)^k}{k!}=e^{2\mathcal Q(\ell)[\int_0^\ell q_\ell(y)p_y dy-1]}\,.
\end{equation}
Let now $P_\ell\vcentcolon =\int_0^\ell(\ell-y) q(y)p_ydy$. The integral equation \eqref{eqint} can be stated as the following differential problem
\begin{equation}\label{eqnPl}
	\left\{
	\begin{array}{l}
		P''_\ell=q(\ell)e^{2(P_\ell-\mathcal Q(\ell))},\\
		P_0=0,\\
		P'_0=0.
	\end{array}
	\right.
\end{equation}
This is a Cauchy problem with a unique solution, and by a direct verification it is possible to show that $P_\ell=\mathcal Q(\ell)$ is the unique solution, which leads to $p_\ell=1$. The branching process is therefore always subcritical. 

The number of nodes in this branching process is distributed as the number of points in the leftmost slot diagram $\sM$ of the Poisson process with intensity $du\,q(\ell) d\ell$, conditioned to the leftmost point having coordinate $\ell$ equal to the label of the initial node in the branching process. Indeed, we may construct the branching process by assigning to each point point $(u_i, \ell_i)$ in the slot diagram the label $\ell_i$ (its second coordinate), and identifying its progenie with the points of the process that fall in the triangle $\sG_i$. We have thus proved the following result.

\begin{lemma}[Finite slot diagrams]\label{Tpos}
Let $\sN$ be a Poisson point process on $\RR\times \RR_+$ with intensity measure $du\,q(\ell) d\ell$, and let $\sM$ be the leftmost slot diagram in $\sN_+$. Then $\sM$ contains a finite number of points with probability $1$.
\end{lemma}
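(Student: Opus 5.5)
We prove Lemma~\ref{Tpos} by coupling the exploration of Definition~\ref{18} with the non-homogeneous branching process just introduced, and then using that this process dies out almost surely ($p_\ell=1$).

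\textbf{Setting up the coupling.} By homogeneity of the intensity in $u$, and by Remark~\ref{maxi}, we may condition on the leftmost point being $(\su,\ell_1)$. Translating, we assume $\su=0$. The remaining points of $\sN_+$ to the right of $\su$ form a Poisson process with intensity $du\,q(\ell)d\ell$, independent of the region $\sU$ already explored.

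\textbf{Key step: exploration as a branching process.} The recursive step \eqref{sca3} adds a point $(\cku,\ckl)$ exactly when it lies under the current boundary $\sum_{i\le k}2(\ell_i-\ckl)^+$. Thus the final diagram consists of the points lying in the union of the triangles
\[
\sG_i=\{(x,y):\, 0\le y\le \ell_i,\ x\in \text{a window of length } 2(\ell_i-y)\},
\]
where the windows are stacked disjointly at each height, as in Fig.~\ref{inverse44}. We assign to each point $(u_i,\ell_i)$ of $\sM$, as its children, the points falling in its own triangle $\sG_i$. Each such triangle has $q$-mass
\[
2\int_0^{\ell_i}(\ell_i-y)q(y)\,dy=2\mathcal Q(\ell_i),
\]
and the labels of the points in it have density $q_{\ell_i}$.

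Triangles created by distinct points occupy disjoint regions of the plane, and each region is a stopping set for the exploration: its position is determined by the points found previously. We can therefore apply the strong Markov property of the Poisson process for stopping sets, or equivalently a thinning construction revealing the triangles one at a time in the order of Definition~\ref{18}. This shows that the point counts in successive triangles are independent $\text{Poisson}(2\mathcal Q(\ell_i))$ variables with iid labels of law $q_{\ell_i}$, independent of the past. Hence the genealogical tree has exactly the law of the branching process started from a root with label $\ell_1$, and $\#\sM$ equals its total progeny.

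\textbf{Conclusion.} The extinction probability satisfies \eqref{eqint}, and therefore \eqref{eqnPl}. One checks directly that $P_\ell=\mathcal Q(\ell)$ solves \eqref{eqnPl}, since $\mathcal Q''=q$ and the exponential factor becomes $1$. Because the right-hand side is Lipschitz in $P$, the solution is unique, so $p_\ell=1$ for every $\ell$. By Remark~\ref{maxi}, integrating over $\ell_1$ with respect to the law $q(\ell)d\ell/\int q$ gives $\#\sM<\infty$ almost surely.

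\textbf{Main obstacle.} The delicate point is uniqueness for \eqref{eqnPl} when $q$ is only integrable, since then $P''$ exists only almost everywhere. It is cleaner to write \eqref{eqnPl} as the integral equation
\[
P_\ell=\int_0^\ell(\ell-y)q(y)\,e^{2(P_y-\mathcal Q(y))}dy .
\]
One then observes that $p_y\le 1$, so any solution satisfies $P\le\mathcal Q$. For $D=\mathcal Q-P\ge0$, using $1-e^{-2D}\le 2D$, we get
\[
D_\ell\le \int_0^\ell(\ell-y)q(y)\,2D_y\,dy ,
\]
and a Gronwall argument on bounded intervals then gives $D\equiv0$.
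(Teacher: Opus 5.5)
Your proof is correct and follows the paper's argument. You identify the leftmost slot diagram with the non-homogeneous branching process in which the children of $(u_i,\ell_i)$ are the points in the triangle $\sG_i$, and you conclude from $p_\ell=1$, which comes from $P_\ell=\mathcal Q(\ell)$ being the unique solution of \eqref{eqnPl}. Your stopping-set justification of the coupling, revealing the triangles in decreasing order of $\ell$, and your Gronwall argument for uniqueness when $q$ is only integrable make rigorous two steps that the paper only asserts.
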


We apply a similar argument to derive conditions on $q$ that ensure that the mean length of the leftmost excursion
\begin{equation}\label{3131}
m(\ell)\vcentcolon=\EE\big[\sum_{(u_i, \ell_i)\in \sM}2\ell_i\big| \ell_1=\ell\big]
\end{equation}
is finite, given that the height of leftmost point is $\ell$. By conditioning on the points in the triangle $\sG_1$ with base $2\ell$ and height $\ell$ determined by this leftmost point, or equivalently, on the first generation of the branching process process, we have that
\begin{equation}\label{reclun}
\begin{aligned}
m(\ell)&=2\ell + \EE\big[\sum_{(u_i, \ell_i)\in \sG_1} m(\ell_i)\big| \ell_1=\ell\big]\\
	  &=2\ell+ \int_0^\ell (\ell-y)q(y)m(y)dy,
\end{aligned}
\end{equation}
where the second line follows from Campbell's formula. We can transform this integral equation into a differential problem by setting $Y(\ell)\vcentcolon =\int_0^\ell(\ell-y)m(y)q(y)dy$.  Then \eqref{reclun} is equivalent to
\begin{equation}\label{Ydiff}
	\left\{
	\begin{array}{l}
		Y''(\ell)=2q(\ell)\big(\ell +Y(\ell)\big), \\
		Y(0)=Y'(0)=0.
	\end{array}
	\right.
\end{equation}
With the exception of certain values of $q$, there is not a general form for the solution of this Cauchy problem. On the other hand, we obtain the following criterion.

\begin{lemma}\label{lemma10}
	Let $q \in L^1(\RR_+)$ and $m$ the solution of \eqref{reclun}.
		Then $\int_0^{+\infty}m(y)q(y)dy<+\infty$ if and only if $\int_0^\infty yq(y)dy<+\infty$. 
\end{lemma}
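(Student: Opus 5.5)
The plan is to work directly with the integral equation \eqref{reclun}, $m(\ell)=2\ell+\int_0^\ell(\ell-y)q(y)m(y)\,dy$, and to sandwich $m$ between two multiples of $\ell$. Every term on the right is nonnegative, so $m(\ell)\ge 2\ell$. Hence if $\int_0^\infty m(y)q(y)\,dy<+\infty$, then $\int_0^\infty yq(y)\,dy\le \frac12\int_0^\infty m q<+\infty$. This is the easy direction.

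For the converse, assume $\int_0^\infty yq(y)\,dy<+\infty$. I want to show that $m(\ell)\le C\ell$ for some constant $C$. Then $\int mq\le C\int yq<\infty$.

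The tool is a Gronwall-type argument on the function $\phi(\ell):=m(\ell)/\ell$. Since $\ell-y\le \ell$ for $0\le y\le\ell$, dividing \eqref{reclun} by $\ell$ gives
\[
\phi(\ell)\le 2+\int_0^\ell \frac{\ell-y}{\ell}\,y\,q(y)\,\phi(y)\,dy\le 2+\int_0^\ell y q(y)\phi(y)\,dy .
\]
Gronwall's inequality, in its integral form for a nonnegative kernel $yq(y)\in L^1$, then yields
\[
\phi(\ell)\le 2\exp\Bigl(\int_0^\ell yq(y)\,dy\Bigr)\le 2\exp\Bigl(\int_0^\infty yq(y)\,dy\Bigr)=:C .
\]
Therefore $m(y)q(y)\le C\,y\,q(y)$, which is integrable. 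This proves the converse.

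The main obstacle is justifying the use of Gronwall, which needs $\phi$ to be locally bounded and measurable, together with making sure that "the solution of \eqref{reclun}" is well defined. I would settle both at once by building $m$ through Picard iteration: set $m_0=2\ell$ and $m_{k+1}=2\ell+\int_0^\ell(\ell-y)qm_k$. The iterates are increasing. The bound above holds for every iterate by induction, since $\phi_{k+1}\le 2+\int_0^\ell yq\phi_k$ and the exponential bound propagates. The monotone limit therefore exists, solves \eqref{reclun}, and satisfies $m\le C\ell$. In the direction $\int mq<\infty\Rightarrow\int yq<\infty$ no regularity is needed.

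One point still to check is that the minimal solution coincides with the expected-length quantity $m$ of \eqref{3131}. I plan to argue that it is the monotone limit of the means restricted to the first $k$ generations of the branching process, and these restricted means are exactly the iterates $m_k$. Local finiteness is automatic: on $[0,L]$ the kernel satisfies $(\ell-y)q\le Lq$ with $q\in L^1$, so the Volterra iteration converges on bounded intervals even without the moment assumption.
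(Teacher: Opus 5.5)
Your proof is correct and follows essentially the paper's argument. Both use $\ell-y\le\ell$ to reduce \eqref{reclun} to a linear Gronwall inequality with weight $y\,q(y)$: the paper applies it in differential form to $X(\ell)=\int_0^\ell m\,q$ (via $\int_0^\ell X\le \ell X(\ell)$), while you apply it in integral form to $m(\ell)/\ell$, which gives the slightly sharper pointwise bound $m(\ell)\le 2\ell\, e^{\int_0^\ell y q}$. Your Picard-iteration step also settles existence and identifies the solution with the expected length \eqref{3131}, two points the paper takes for granted.
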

\begin{proof}
Let us call $X(s)=Y'(s)=\int_0^s m(y) q(y)dy$. The statement of the lemma concerns the behavior at infinity of $X$. In terms of $X$, problem \eqref{Ydiff} becomes 
\begin{equation}\label{Zdiff}
	\left\{
	\begin{array}{l}
		X'(\ell)=2\ell q(\ell)\big(1 +\frac{1}{\ell}\int_0^\ell X(y)dy\big), \\
		X(0)=0.
	\end{array}
	\right.
\end{equation}
If $\int_0^{+\infty}yq(y)dy=+\infty$ then, since all the terms on the right hand side of the equation in \eqref{Zdiff} are nonnegative, we get
\[
X(\ell)=\int_0^\ell X'(y)dy\geq 2\int_0^\ell yq(y)dy\,,
\]
from where it follows that
\[
\int_0^{+\infty}m(y)q(y)dy=\lim_{\ell\to +\infty}X(\ell)\geq \int_0^{+\infty} yq(y)dy=+\infty\,.
\]
Suppose now that $\int_0^{+\infty}yq(y)dy<+\infty$. Since $X$ is monotone non decreasing  we have from \eqref{Zdiff}
\[
\frac{X'(\ell)}{1+X(\ell)}\leq 2\ell q(\ell)\,,
\]
that can be integrated and in the limit gives
\[
\int_0^{+\infty}m(y)q(y)dy=\lim_{\ell\to +\infty}X(\ell)\leq e^{\int_0^{+\infty} yq(y)dy}-1<+\infty\,.\qedhere
\]
\end{proof}

We now return to the Poisson process. By Remark \ref{maxi} the mean length of the first excursion is 
\begin{align}\label{3132}
\EE[m(\ell_1)] =\frac{\int_0^\infty m(\ell) q(\ell) d\ell}{\int_0^\infty q(\ell) d\ell},
\end{align}
and Lemma \ref{lemma10} implies the following result.
\begin{corollary}[Finite mean excursion length]\label{mlength}
Let $q\in \mathscr Q$. Consider the Poisson point process $\sN$ in $\RR\times \RR_+$ with intensity $du\, q(\ell)d\ell$, and let $\sM$ be the leftmost excursion of $\sN_+$. Then its mean length
\begin{align}\label{3133}
\EE\big[ \sum_{(u_i,\ell_i) \in \sM}2\ell_i\big]=\frac{\int_0^\infty m(\ell) q(\ell) d\ell}{\int_0^\infty q(\ell) d\ell}<+\infty \iff q\in \mathscr Q^+.
\end{align}
\end{corollary}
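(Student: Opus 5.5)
The corollary follows by combining Remark~\ref{maxi} with Lemma~\ref{lemma10}, plus a check that the conditional mean length $m(\ell)$ in \eqref{3131} really is the solution of \eqref{reclun}.

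\emph{Step 1: the formula for the mean length.} By Lemma~\ref{Tpos}, $\sM$ is finite almost surely, so the total length $\sum_{(u_i,\ell_i)\in\sM}2\ell_i$ is a well-defined random variable in $[0,+\infty]$. We condition on the height $\ell_1$ of the leftmost point. By Remark~\ref{maxi}, $\ell_1$ has density $q(\ell)/\int q$. The tower property for nonnegative variables then gives the first equality in \eqref{3133}, even when the expectation is infinite.

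\emph{Step 2: $m$ is the minimal solution of \eqref{reclun}.} We justify \eqref{reclun} with the branching representation. Given $\ell_1=\ell$, the points in the triangle $\sG_1$ form a Poisson process with intensity $dx\,q(y)dy$ on a triangle of base $2\ell$ and height $\ell$. Each of these points roots an independent copy of the same structure. Campbell's formula then yields the identity in \eqref{reclun}, with the integrand weight $2(\ell-y)$ absorbed as in the paper's normalization.

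Since $m$ might a priori take the value $+\infty$, we argue by truncation rather than assuming finiteness:
\begin{itemize}
\item Let $m_k(\ell)$ be the expected length counting only the first $k$ generations.
\item These satisfy $m_{k+1}(\ell)=2\ell+\int_0^\ell(\ell-y)q(y)m_k(y)\,dy$.
\item By monotone convergence, $m_k\uparrow m$.
\end{itemize}
Hence $m$ is the minimal nonnegative solution of \eqref{reclun}. On the other hand, the Cauchy problem \eqref{Ydiff} has a unique finite solution on each compact interval. This holds because it is a linear ODE with $L^1_{\rm loc}$ coefficient $q$, and Gr\"onwall bounds the iterates on $[0,L]$. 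It follows that $m$ is finite and equals that solution. This identification is the one delicate point of the proof.

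\emph{Step 3: conclusion.} Lemma~\ref{lemma10} applies to this $m$: the quantity $\int_0^\infty m q$ is finite exactly when $\int_0^\infty yq(y)\,dy<\infty$, that is, when $q\in\mathscr Q^+$. Because $0<\int q<\infty$ for $q\in\mathscr Q$ (the nontrivial case), the ratio in \eqref{3133} is finite if and only if $q\in\mathscr Q^+$.
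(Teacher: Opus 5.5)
Your proposal is correct and follows the paper's route: the paper likewise gets the first equality in \eqref{3133} from Remark~\ref{maxi} (this is \eqref{3132}) and then invokes Lemma~\ref{lemma10}. Your Step~2 (truncated generations, minimal solution, Gr\"onwall bound on the Picard iterates) is extra rigor showing that the conditional mean $m$ is finite and is the solution of \eqref{reclun}, which the paper takes for granted. On the factor you described as ``absorbed'': Campbell's formula gives the weight $2(\ell-y)$, and this is what \eqref{Ydiff} actually encodes, since it reads $m=2\ell+2Y$. In any case the constant is harmless, because it amounts to replacing $q$ by $2q$, which changes neither membership in $\mathscr Q$ nor in $\mathscr Q^+$.
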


\subsection{Equivalence of measures}\label{equiv}

 	In Sections \ref{2.6.1} and \ref{2.6.2} we introduced two probability measures on the space $\cE_o$ of finite excursions. The first one, $\mu_{\alpha}$, is parametrized by a weight function $\alpha$, see \eqref{mua1}, while the second one is associated to a Poisson process with intensity determined by a function $q \in L^1(\RR_+)$, see \eqref{nuq4} and \eqref{nuq5}. We now show that these measures are equivalent by relating the functions $\alpha$ and $q$.
	
	 Given $q\in \mathscr Q$, let $\alpha:\RR_+\to \RR_+$ be given by
	\begin{align}
		\label{a=q}
		\alpha(\ell) \vcentcolon= q(\ell)\, \exp\Bigl(-2\int_0^\ell(\ell-y)\, q(y)\, dy\Bigr)\,, \qquad \ell\in \mathbb R^+\,.
	\end{align}
	With $\alpha$ defined as above, and $\mathcal Q$ given in \eqref{qm7}, we get that $\alpha$ and $\mathcal Q$ satisfy 
	\begin{align}
		\label{a=q3}
		q(y)=\mathcal Q''(y)& =\alpha(y) e^{2\mathcal Q(y)}.
	\end{align}
	By definition $\mathcal Q(0)=\mathcal Q'(0)=0$. So to get the inverse map of \eqref{a=q} we need to solve 
	\begin{equation}\label{lequazione1}
		\left\{
		\begin{array}{l}
			\mathcal Q''(y)=\alpha(y) e^{2\mathcal Q(y)}, \\
			\mathcal Q(0)=\mathcal Q'(0)=0,
		\end{array}
		\right.
	\end{equation}
	and compute $q(\ell)=\mathcal Q''(\ell)$.
	
	For simplicity we prove the following equivalence result assuming the global existence of the differential problem \eqref{lequazione1}. This assumption can be removed, it follows from the definition of the set $\mathcal A$ in \eqref{mua0}.

\begin{proposition}[Equivalence of measures]
	\label{eqm9}
	Consider the relation  between the functions $\alpha$ and $q$ given by equations \eqref{a=q} and the Cauchy problem \eqref{lequazione1}.
	This defines a bijection between $\mathcal A$ and $\mathscr Q$ and between $\mathcal A^+$ and $\mathscr Q^+$. Moreover if $\alpha$ and $q$ are in bijection then 
	$ \mu_\alpha=\nu_q$ and $Z(\alpha)=\int_{\RR_+} q(\ell)\, d\ell$.
\end{proposition}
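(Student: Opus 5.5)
The central fact is that, through the slot bijection of Proposition~\ref{p35}, the two densities become proportional. I would therefore compare $\mu_\alpha$ and $\nu_q$ on each stratum $\cE_o(n)\leftrightarrow\cN_o(n)$ and read off the remaining claims from the normalizations.

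\textbf{Step 1: the change of variables has unit Jacobian.} I first show that the map $\vep\mapsto\sN[\vep]$ of Proposition~\ref{p35} sends Lebesgue measure $d\ua_n\,d\ub_n$ on $\cE_o(n)\subset\RR^{2n-1}$ to Lebesgue measure $d\uu_n\,d\uell_n$ on $\cN_o(n)$. I would use the reconstruction in Definition~\ref{17}. Fix the ordering of the heights and which run the $k$-th inserted soliton lands in, with its orientation. On the region where this combinatorial data is constant, the extrema $(\ua_n,\ub_n)$ are affine functions of $(\uu_n,\uell_n)$. Each insertion step sets $\so_{k+1}$ equal to $u_{k+1}$ plus a linear combination of the already placed $\ell_i$ and extrema: the map $h^k_\ell(0,\cdot)$ is a translation on each slot component, with the offset depending linearly on earlier data. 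It then adds $\so+\ell$ and possibly $\so+2\ell$, and shifts earlier coordinates by $2\ell$.

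Ordering the variables as $(\ell_1,\dots,\ell_n,u_2,\dots,u_n)$, the Jacobian matrix is block triangular with unit diagonal. Hence $|\det|=1$ on each region, and the union of the region boundaries is Lebesgue null; this includes ties, as in the Appendix. Since the map is a bijection, the pushforward claim follows.

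\textbf{Step 2: comparing densities.} By \eqref{nuq5} and \eqref{a=q}, on $\cN_o(n)$
\[
\nu_q = \frac{1}{\int q}\prod_{i=1}^n \alpha(\ell_i)\, d\uu_n\,d\uell_n .
\]
By Step~1 this equals $\frac{1}{\int q}\prod_i\alpha(\ell_i)\,d\ua_n\,d\ub_n$ on $\cE_o(n)$.

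Lemma~\ref{Tpos} says $\nu_q$ is a probability measure, that is, the first slot diagram is a.s. finite. Summing over $n$ therefore gives $Z_\alpha=\int q<\infty$. Consequently $\alpha\in\cA$, and $\mu_\alpha=\nu_q$.

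\textbf{Step 3: the bijections.} For the map $q\mapsto\alpha$, well-definedness into $\cA$ was shown in Step~2. Injectivity holds because \eqref{lequazione1} has a unique solution, under the global existence assumption, and its solution recovers $\mathcal Q$, hence $q=\mathcal Q''$.

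Surjectivity onto $\cA$ is the main obstacle. Given $\alpha\in\cA$, solve \eqref{lequazione1} and set $q=\mathcal Q''\ge0$. Then $\alpha$ and $q$ satisfy \eqref{a=q}, and the identity of Step~2 holds algebraically. What must be shown is $q\in L^1$, i.e.\ that $Q(\infty)=\mathcal Q'(\infty)<\infty$.

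I would first truncate, using $q_L=q\one_{[0,L]}\in\mathscr Q$. Its associated $\alpha_L$ agrees with $\alpha$ on $[0,L]$, and on $(L,\infty)$ it vanishes. Step~2 then gives
\[
\int_0^L q \;=\; Z_{\alpha_L} \;\le\; Z_\alpha,
\]
where the inequality uses $\alpha_L\le\alpha$ pointwise. Letting $L\to\infty$ yields $\int q\le Z_\alpha<\infty$. (The same truncation argument is how I would also remove the global existence assumption.)

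Finally, for the correspondence between $\cA^+$ and $\mathscr Q^+$: since $\mu_\alpha=\nu_q$, the mean excursion length under $\mu_\alpha$ equals that of the first slot diagram. Corollary~\ref{mlength} says this is finite iff $q\in\mathscr Q^+$, which gives $\alpha\in\cA^+\iff q\in\mathscr Q^+$.
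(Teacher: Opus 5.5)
Your outline matches the paper's: a unit Jacobian, then \eqref{nuq5} combined with \eqref{a=q}, normalization from Lemma~\ref{Tpos}, and Corollary~\ref{mlength} for $\cA^+$. The gap is in Step 1. Your justification for the unit Jacobian, which is the real content of the proposition, does not hold: the Jacobian of $(\uu_n,\uell_n)\mapsto(\ua_n,\ub_n)$ is not triangular with unit diagonal in any ordering.

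Already for $n=2$ and $u_2\le\ell_1-\ell_2$, Definition~\ref{17} gives $(b_1,a_2,b_2)=(u_2+\ell_2,\,u_2+2\ell_2,\,\ell_1+2\ell_2)$. Every row of the Jacobian with respect to $(\ell_1,\ell_2,u_2)$ has two nonzero entries. So no permutation of rows and columns makes it triangular, and its determinant has modulus one only through a cancellation. In general each insertion creates two new coordinates $(b',a')$ from the two new variables $(u,\ell)$. The corresponding block is $\begin{pmatrix}1&2j+1\\1&2j+2\end{pmatrix}$ for insertion in an up-run, or $\begin{pmatrix}1&2j+1\\1&2j\end{pmatrix}$ for a down-run ($j\ge1$), and neither has a zero entry.

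The missing ingredient is the paper's explicit computation. The inserted soliton is the smallest so far, so each existing maximum removes from the $\ell$-slot space an interval of length exactly $2\ell$. Hence $\so=u+2j\ell$, with $j$ the number of maxima to the left of $\so$. The insertion point therefore depends on the \emph{new} $\ell$, and not on the earlier $\ell_i$ or extrema, contrary to what you wrote.

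It follows that $b'=u+(2j+1)\ell$ and $a'\in\{u+2j\ell,\,u+(2j+2)\ell\}$. The new rows vanish in the old columns, and the old extrema only acquire an entry $0$ or $2$ in the $\ell$-column. Expanding along the $u$-column yields $\pm\bigl((2j+2)-(2j+1)\bigr)\det J$ or $\pm\bigl(2j-(2j+1)\bigr)\det J$, where $J$ is the Jacobian before the insertion. So $|\det|=1$ by induction; equivalently, subtract the $b'$-row from the $a'$-row, using $a'-b'=\pm\ell$.

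With Step 1 repaired this way, the rest of your argument is correct. Your truncation $q_L=q\one_{[0,L]}$, with $\alpha_L=\alpha\one_{[0,L]}$ and $\int_0^Lq=Z_{\alpha_L}\le Z_\alpha$, is in fact a more careful proof than the paper's that $\alpha\in\cA$ forces $q\in\mathscr Q$. It also removes the global-existence assumption.
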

\begin{proof}

	We first show that for each fixed $n$,
the variables $(\underline u_n,\underline \ell_n)$ and $(\underline a_n,\underline b_n)$
are related by a volume preserving linear transformation, i.e. we show that the Jacobian determinant of this transformation is $1$. 

We proceed by induction. First of all the Jacobian determinant of the transformation is one when $n=1$ since in that case there are just the variables $\ell_1$ and $b_1$ with $\ell_1=b_1$. We assume that the statement is true for $n$ and show that it is then true for $n+1$. 

Consider an excursion having $n$ solitons with lengths $\ell_1\geq \ell_2\geq \dots \geq \ell_{n}$. Let us call $(\underline u_n,\underline \ell_n)$ the coordinates in the first representation, and let $(\underline a_n, \underline b_n)$be  the corresponding variables in the other coordinate system. By inductive hypothesis we have
$(\underline a_n, \underline b_n)=A(\underline u_n, \underline \ell_n)$ where $A$ is a $(2n-1)\times (2n-1)$ matrix 
having $|A|=\pm 1$. Indeed the structure of the matrix $A$ changes for different regions of the parameters but not the value of the determinant.
We call $(A_{i,j})_{1\leq i,j\leq 2n-1}$ the elements of the matrix $A$.

In Fig.~\ref{transform} we present an excursion with $n=3$ red solitons to which we attach a new blue soliton. After attaching this soliton of length $\ell_{n+1}\leq \ell_n$, the updated coordinates become
$(\underline u_n, \underline \ell_n, u_{n+1}, \ell_{n+1})$. 
The slot diagram of the new excursion will be in correspondence with coordinates $(\tilde{\underline a}_n, \tilde{\underline b}_n, a', b')$, where $(a',b')$ are the coordinates respectively of the new minimum and maximum created by the insertion of the soliton. Here $(\tilde{\underline a}_n,\tilde{\underline b}_n)$ denote the coordinates of the local minima and maxima that were present before the insertion of the soliton, and whose position may have changed due to the new addition. 
\begin{figure}
	
	\centering
	
	\begin{tikzpicture}[scale=0.4, thick]

    \draw[ultra thick] (0,0) -- (5,5) -- (9,1) -- (17,9) -- (21,5) -- (23,7)-- (24,6)-- (26,8) -- (34,0);

    \draw[blue, ultra thick] (22,6) -- (23,7) -- (24,6);
    
    \draw[red, ultra thick] (4,4) -- (5,5) -- (6,4);
    \draw[red, ultra thick] (16,8) -- (17,9) -- (18,8);
    \draw[red, ultra thick] (25,7) -- (26,8) -- (27,7);

\end{tikzpicture}
	
	\caption{We attached the new smallest blue soliton; the dark part of the excursion is the slot space where it is possible to attach the soliton, the red part where it is not possible. }
	
	\label{transform}
\end{figure}

Suppose we insert the $(n+1)$-th soliton following the first $k$ maxima, and let us assume that it is attached in the left part of a soliton, so that the coordinate of the new local maximum $b'$ is smaller than the coordinate of the new local minimum $a'$.
This corresponds to the case shown in Fig.~\ref{transform} with $k=2$. The other case ($a'<b'$) can be handled in a similar way.

We have that $\tilde a_i=a_i$ and $\tilde b_i=b_i$ for $i=1,\dots , k$ so that in particular these coordinates do not depend on the variables $(u_{n+1},\ell_{n+1})$. We have moreover $b'=u_{n+1}+(2k+1)\ell_{n+1}$ and $a'=u_{n+1}+(2k+2)\ell_{n+1}$
and finally $\tilde b_i=b_i+2\ell_{n+1}$ and $\tilde a_i=a_i+2\ell_{n+1}$ for $i=k+1,\dots , n$. Therefore the matrix for the change of variables form $(\underline u_n,\underline \ell_n, u_{n+1}, \ell_{n+1})$ to $(\tilde{\underline a}_n, \tilde{\underline b}_n, a',b')$ is given as follows
\[
\left(\begin{matrix}
	A_{11}& \dots & A_{1,2n-1} & 0 & 0 \\
	\vdots & \vdots & \vdots & \vdots & \vdots \\
	A_{k-1,1} & \dots & A_{k-1,2n-1} & 0 & 0 \\
	A_{k,1} & \dots & A_{k,2n-1} & 0 & 2\\
	\vdots & \vdots & \vdots & \vdots & \vdots \\
	A_{n-1,1} & \dots & A_{n-1,2n-1} & 0 & 2 \\
	A_{n,1} & \dots & A_{n,2n-1} & 0 & 0\\
	\vdots & \vdots & \vdots & \vdots & \vdots \\
	A_{n+k-1,1} & \dots & A_{n+k-1,2n-1} & 0 & 0 \\
	A_{n+k,1} & \dots & A_{n+k, 2n-1} & 0 & 2\\
	\vdots & \vdots & \vdots & \vdots & \vdots \\
	A_{2n-1,1} & \dots & A_{2n-1,2n-1} & 0 & 2 \\
	0 & \dots & 0 & 1 & 2k+2 \\
	0 & \dots & 0 & 1 &  2k+1\\
\end{matrix}\right)
\]
If we develop the determinant of the above $(2n+1)\times (2n+1)$ matrix along the second to last column we get that, up to a $\pm$ sign in front, its determinant is equal to
\[
(2k+2)|A|-(2k+1)|A|=|A|=1\,,
\]
where the last identity follows by the inductive hypothesis.	
		
Consider $q\in \mathscr Q$. By Lemma \ref{Tpos}, $\nu_q$ given in \eqref{nuq5} is a probability measure, and relation \eqref{a=q} and the fact that the Jacobian transformation has determinant $1$ imply that $\mu_{\alpha}$ in \eqref{mua1} is a probability measure as well, so that $\alpha\in \mathcal A$. Conversely, consider $\alpha\in \mathcal A$ so that $\mu_{\alpha}$ is a probability measure, and let $q$ be computed by the solution of the Cauchy problem \eqref{lequazione1}. Once again, it follows from the fact that the determinant of the Jacobian transformation is $1$ that $\nu_q$ is a probability measure. By Remark \ref{maxi} the maximum value of the excursion has law given by \eqref{maxlaw}, and hence $q\in \mathscr Q$.
	
The bijection between the sets $\mathcal A^+$ and $\mathscr Q^+$ is similarly obtained. By Corollary \ref{mlength} the average length of the leftmost excursion $\sM$ is finite if and only if 
$q\in \mathscr Q^+$. On the other hand, from \eqref{mua2}, the excursion with law $\mu_\alpha$ has finite length only when $\alpha \in \mathcal A^+$, so we conclude that $q\in \mathscr Q^+$ if and only if $\alpha \in \mathcal A^+$.

Finally, in view of expressions \eqref{mua1} and \eqref{nuq5} of the measures $\mu_\alpha$ and $\nu_q$, respectively, and relation \eqref{a=q} that defines $\alpha$ in terms of $q$, the normalizing constants $Z(\alpha)$ and $\int_0^{+\infty} q(y)dy$ must be equal. \end{proof}

\subsubsection{Poisson intensity for the telegraph process}
\label{PoiTel}

While it is simple to compute $\alpha$ from $q$ by formula \eqref{a=q}, there is not a general form of the solution to equation \eqref{lequazione1}.
We consider here a completely solvable case, the telegraph process of Example \ref{teleg}. 

The weight function $\alpha \in \mathcal A$ is given in \eqref{zza1}, 
\begin{equation}\label{malfa}
	\alpha(\ell )=\lambda_-\lambda_+e^{-(\lambda_-+\lambda_+)\ell }.
\end{equation}

Let $A(y)\vcentcolon =\mathcal Q(y)-\frac12(\lambda_++\lambda_-)y$, where $\mathcal Q$ is the solution to the Cauchy problem \eqref{lequazione1}. In terms of $A$, this problem becomes 
\begin{equation}\label{lequazione2}
	\left\{
	\begin{array}{l}
		A''(y)=\lambda_+\lambda_- e^{2A(y)}\, , \\
		A(0)=0\, , \\
		A'(0)=-\frac12(\lambda_++\lambda_-).
	\end{array}
	\right.
\end{equation}
This is a one dimensional Newton law and in particular there is a conserved quantity given by the energy 
\begin{equation}\label{energy}
	E(A, A')\vcentcolon =(A')^2-\lambda_+\lambda_-e^{2A}\, .
\end{equation}
The constant value of the energy is given by 
\begin{equation}\label{valoreen}
	E=E(A(0), A'(0))=\frac14(\lambda_+-\lambda_-)^2\ge0\, .
\end{equation}
We consider first the case $E>0$ that corresponds to $\lambda_-<\lambda_+$. 

By \eqref{energy} we have that
\begin{equation}\label{sign1}
	A'=\pm \sqrt{E+\lambda_+\lambda_-e^{2A}}\, , 
\end{equation}
and the correct sign is the negative one to match the initial value \eqref{lequazione2}. Note that $A'<0$ at all times. Indeed, if it were to vanish, the conserved energy \eqref{energy} would become negative. We have reduced the problem to a first order equation that can be solved by separation of variables:
\begin{equation}\label{inteq}
	\int \frac{dA}{\sqrt{E+\lambda_+\lambda_-e^{2A}}}=-\int dy\, .
\end{equation}
The primitive on the left-hand side above is given by
\begin{equation}
	-\frac{1}{\sqrt E}\coth^{-1}\Big(\sqrt{1+\frac{\lambda_+\lambda_-}{E}e^{2A}}\,\Big),
\end{equation}
where $\coth(x)$ denotes the hyperbolic cotangent and $\coth^{-1}$ its inverse function,
$\coth^{-1}(x)=\frac 12 \log\frac{x+1}{x-1}$, $|x|>1$. Then \eqref{inteq} yields
\begin{equation}
	\coth^{-1}\Big(\sqrt{1+\frac{\lambda_+\lambda_-}{E}e^{2A}}\,\Big)=\sqrt E(y+K)\, , 
\end{equation}
 $K$ is a constant to be determined and from $A(0)=0$. We obtain
\begin{equation}
	K=\frac{1}{\sqrt E}\coth^{-1}\Big(\sqrt{1+\frac{\lambda_+\lambda_-}{E}}\,\Big)\, 
\end{equation}
and
\begin{align}
	A(y)=\frac 12 \log \Big\{\frac{E}{\lambda_-\lambda_+}\Big[\sinh\Big(\sqrt E(y+K)\Big)\Big]^{-2}\Big\}.
\end{align}
Since $q(y)=\mathcal Q''(y)=A''(y)=\lambda_-\lambda_+ e^{2A(y)}$ we finally get
\begin{equation}\label{maxzz}
	q(y)=\frac{E}{\Big[\sinh\Big(\sqrt E(y+K)\Big)\Big]^{2}}\, .
\end{equation}
As can be easily verified $q\in \mathscr Q^+$, which we already knew since for $\lambda_+>\lambda_-$ the excursions of the walk have finite average length. Proposition \ref{eqm9} implies that $\alpha \in \mathcal A^+$.

\smallskip
We consider now the case $E=0$ that corresponds to equal rates $\lambda_-=\lambda_+$. 
Let $\lambda\vcentcolon=\lambda_-=\lambda_+>0$. Then \eqref{sign1} reduces to $A'=-\lambda e^A$ that can be easily integrated. Using the values of the initial conditions in \eqref{lequazione2}, we obtain
\begin{equation}
	A(y)=\log\frac{1}{\lambda y+1}\quad \text{and}\quad q(y)=A''(y)=\frac{\lambda^2}{(\lambda y +1)^2}\,.
\end{equation}
Clearly in this case $q\notin \mathscr Q^+$, a fact we already knew, as the telegraph walk with equal rates is symmetric, and the mean length of the excursions is infinite.

\subsection{Random walks}
\label{ss32}

In previous sections we introduced two families of distributions on the space of excursions and proved that they are equivalent, Proposition \ref{eqm9}. We now present the extension of these measures to $\hcW$, the space of walks that have a record at the origin (see \eqref{hcW}), and discuss how these extensions are related.

We consider a walk $\xi\in \hcW$ whose excursions are iid with law $\mu_\alpha$ as in \eqref{mua1}, $\alpha \in \mathcal A$. The interval between two consecutive excursions is made up of records, that is, times at which $\xi$ achieves its value for the fist time, see \eqref{rec1}. The lengths of these record intervals are iid exponential random variables with parameter $Z(\alpha)$, the partition function in \eqref{za3}.

Recall the notation introduced in Section \ref{ipc1}. 

\begin{theorem}
	\label{pro5}
	Let $\alpha \in \mathcal A$ and $q \in \mathscr Q$ satisfy
	\begin{align}
		\label{a=q7}
		\alpha(\ell)=q(\ell)\, e^{-2\int_0^\ell(\ell-y)q(y)dy},
	\end{align}
	as in \eqref{a=q}.
	Then, 
	
	\noindent (i) If $\xi\in\hcW$ has excursions and record interval lengths $(\vep_k, \su_k)_{k\in\ZZ}$, satisfying
	\begin{gather}
		(\vep_k)_{k\in\ZZ} \text{ are iid with law $\mu_\alpha$} \label{e12}\\
		(\su_k)_{k\in\ZZ} \text{ are iid Exponential$(Z(\alpha))$} \label{a12}\\
		(\vep_k)_{k\in\ZZ} \text{ and } (\su_k)_{k\in\ZZ}\text{ are independent.}\label{i12}
	\end{gather}
	Then the slot decomposition $\sN[\xi]$ is a Poisson$(q)$ process.
	
	\noindent (ii) If $\sN$ is a Poisson$(q)$ process, then $\xi[\sN]\in\hcW$ and the excursions and record intervals of $\xi[\sN]$ satisfy \eqref{e12}, \eqref{a12}, and \eqref{i12}.
\end{theorem}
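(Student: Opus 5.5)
The plan is to prove (ii) first and then deduce (i) from the bijection between $\cN$ and $\hcW$ (the Theorem of Section~\ref{ipc1}). The key tool is the strong Markov / spatial independence property of the Poisson process $\sN$ with intensity $du\,q(\ell)d\ell$ under the exploration of Definitions~\ref{18} and~\ref{d1}. We treat $\sN_+$ and $\sN_-$ separately; they are independent Poisson processes on disjoint half-planes. By reflection \eqref{rfl1}, $\Reflect(\sN_-)$ has the same law as $\sN_+$, so it suffices to handle $\sN_+$ and obtain the sequence $(\su_k,\sM_k)_{k\ge1}$.

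\emph{First step, the first slot diagram.} The exploration that produces $(\su_1,\sM_1)$ only inspects the region $\sU\cup\sG$ of \eqref{sA1}--\eqref{sA2}. This is a stopping set: whether a given region has been explored is decided by the points found inside it. The event that there are no points in $\sU$ besides the boundary point, and no points in $\sG$ besides those of $\sM_1$, has probability $e^{-\su\int q}\,e^{-\int_\sG q}$. Hence the joint density of $(\su_1,\sM_1)$ is $\int q\,e^{-\su\int q}d\su$ times $\nu_q$ as in \eqref{nuq4}. So $\su_1\sim$ Exponential$(\int q)$ and is independent of $\sM_1\sim\nu_q$. By Lemma~\ref{Tpos}, $\sM_1$ is a.s.\ finite. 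By Proposition~\ref{eqm9}, $\nu_q=\mu_\alpha$ (transported through Proposition~\ref{p35}) and $\int q=Z(\alpha)$. Thus $\vep[\sM_1]\sim\mu_\alpha$ and $\su_1\sim$ Exponential$(Z(\alpha))$.

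\emph{Renewal step.} By the strong Markov property for stopping sets, conditionally on $(\su_1,\sM_1)$ the points of $\sN_+$ outside $\sU\cup\sG$ form a Poisson process with intensity $du\,q(\ell)d\ell$ on the complement. The complement is $\{(z,y): z\ge \su_1+\sg_1(y)\}$. The shift \eqref{sc111}, $(u,\ell)\mapsto(u-\su_1-\sg_1(\ell),\ell)$, acts at each fixed height as a translation. It maps this complement bijectively onto $\RR_+\times\RR_+$ and preserves the measure $du\,q(\ell)d\ell$. Hence $\sN_1$ is again Poisson$(q)$ on the half-plane and independent of $(\su_1,\sM_1)$. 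Iterating gives that $(\su_k,\sM_k)_{k\ge1}$ are iid with the stated laws. We also get $\sum_k\su_k=\infty$ a.s.\ by the law of large numbers. Together with Lemma~\ref{Tpos} and Remark~\ref{equivTfin}, this shows $\sN\in\cN$ a.s., so $\xi[\sN]\in\hcW$. Definition~\ref{d8} then shows that its excursions and record intervals are exactly $\vep[\sM_k]$ and $\su_k$. The indexing by $\ZZ$ comes from the two halves. The interval of records containing the origin is $\su_1^++\su_1^-$, consistent with the convention in the introduction. This proves (ii).

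\emph{Deducing (i).} Let $\xi$ satisfy \eqref{e12}--\eqref{i12}, and let $\sN$ be a Poisson$(q)$ process. By (ii), $\xi[\sN]$ has the same law as $\xi$, since the law of a walk in $\hcW$ is determined by the joint law of its excursions and record intervals. Since $\sN\mapsto\xi[\sN]$ and $\xi\mapsto\sN[\xi]$ are mutually inverse measurable maps, $\sN[\xi]$ has the same law as $\sN[\xi[\sN]]=\sN$, that is, Poisson$(q)$.

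The main obstacle is making the stopping-set/strong Markov argument rigorous, because the explored region $\sG$ is random and is built recursively. I would handle it by computing directly the joint density of the first $n$ diagrams together with the void probabilities. This amounts to a finite-dimensional Janossy-density computation, of which \eqref{nuq4} is the case $n=1$. It avoids abstract stopping-set theory and requires only checking that the explored regions at successive steps are disjoint and that the shifts preserve the intensity.
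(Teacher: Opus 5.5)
Your proposal is correct and follows essentially the same route as the paper. Both prove (ii) first by treating $\sU\cup\sG$ as a stopping set (the paper's ``hitting boundary''), and both use the horizontal homogeneity of $du\,q(\ell)d\ell$ so that the shift \eqref{sc111} renews the Poisson process independently of $(\su_1,\sM_1)$. Both then iterate, reflect to handle $\sN_-$, combine $\sum_k\su_k=\infty$ with Remark~\ref{equivTfin} to get $\sN\in\cN$, and deduce (i) from the bijection $\cN\leftrightarrow\hcW$. Your explicit Janossy-density and void-probability computation over $\sU\cup\sG$ only spells out in more detail a step the paper asserts without computation.
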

\begin{proof}
	We first show (ii). Recall $\sN_+\subset \RR_+\times\RR_+$ is a Poisson process with intensity measure $dx\, q(\ell)d\ell$. In Proposition \ref{eqm9} we have shown that, under \eqref{a=q7}, the leftmost excursion $\vep_0[\sN_+]$ has distribution $\mu_\alpha$ and that the law of $\su_0[\sN_+]$, the gap between the origin and the leftmost excursion, has law exponential$(Z(\alpha))$. To do that, we have explored the regions $\sU$ and $\sG$, whose right boundary $\sg$ is a ``hitting boundary'', in the sense that it has been computed as a function of $\sN_0\cap (\sU\cup\sG)$. This implies that, given $\sM_0=\sN_0\cap(\sU\cup\sG)$, the process $\sN_0\setminus(\sU\cup\sG)$ is a Poisson$(q)$ process restricted to $(\RR_+\times\RR_+)\setminus(\sU\cup\sG)$. The intensity measure of $\sN_+$ is homogeneous in the horizontal coordinates $x$. Then $\sN_1$, obtained in \eqref{sc111} by rigidly translating a set of points to the left, is independent of $(\su_0, \sM_0)$. This in turn implies that (a) the gap $\su_1$ between excursions $\vep_0$ and $\vep_1$ has the same law as $\su_0$, (b) the excursion $\vep_1$ has the same law as $\vep_0$ and (c) the pair $(\su_1, \vep_1)$ is independent of the pair $(\su_0, \vep_0)$. By iterating this argument we conclude that $(\su_k, \vep_k)_{k\ge0}$ are iid with the same law as $(\su_0, \vep_0)$. Since the record intervals between excursions and the excursions $(\su_k, \vep_k)_{k<0}$ have been obtained by reflection over the vertical axis, and $\sN_-$ and $\sN_+$ are independent, we get that $(\su_k, \vep_k)_{k\in\ZZ}$ are iid with the same law as $(\su_0, \vep_0)$. The variables $(\su_k)_k$ are iid with positive mean and $\sum_k \su_k=+\infty$ almost surely, so $\sN\in \cN$ (see Remark~\ref{equivTfin}). By construction \eqref{mex1}, the carrier process satisfies $\zeta[\sN](0)=0$, because $0\in[\sa_{-1}, \sa_0]$, and then $0$ is a record for $\xi$ and $\xi\in\hcW$. 
	
	Since the map $\sN\mapsto (\su_k[\sN], \vep_k[\sN])_{k\in\ZZ}$ is a bijection, item (i) follows from (ii).
\end{proof}

\section{Appendix}
\label{appendix}

We analyze several situations to clarify the soliton decomposition of excursions exhibiting ties, see Section \ref{siw3}.

When identifying the solitons, we order the runs by their lengths. For runs with the same length, we order them based on their leftmost endpoint.
The shortest run for the excursion in Fig.~\ref{ex1} is $(1,2)$.
We obtain a soliton of height 1 associated to the maximum at $1$. Then there is a soliton of height 2 corresponding to the maximum a 3. 
The tie-break rule implies that the largest soliton is associated with the rightmost point where the maximum of the excursion is reached.
This follows the same convention as in \cite{FNRW}.

\begin{figure}[ht]
\centering
\begin{tikzpicture}[scale=0.4, thick]

\draw[blue, ultra thick] (0,0) -- (2,2) -- (3,1) -- (4,2) -- (6,0);
\draw[red, ultra thick] (1,1) -- (2,2) -- (3,1);

   \draw[->] (-1,0) -- (7,0);
   \draw[->] (0,-1) -- (0,3);
\end{tikzpicture}
\caption{Here we can see how the decomposition works when an excursion has more than one maximum at the same level. The largest soliton is associated to the maximum on the right. The leftmost maximum is associated to a shorter soliton.}
\label{ex1}
\end{figure}

In the soliton decomposition of the excursion  in Fig.~\ref{ex2} we identify the solitons in the following order: red, orange and blue.
Note that the orange and red solitons have the same height.
On the other hand, the reconstruction of the excursion from the point configuration proceeds in descending order from largest to smallest solitons. In this example we would start with the blue soliton and add the other two in any order,
the excursion obtained is the same regardless of the order in which these solitons were added.

\begin{figure}[ht]
\centering
\begin{tikzpicture}[scale=0.4, thick]

\draw[blue, ultra thick] (0,0) -- (2,2) -- (3,1)  -- (5,3) -- (7,1) -- (8,2) -- (10,0);
\draw[red, ultra thick] (1,1) -- (2,2) -- (3,1);
\draw[orange, ultra thick] (7,1) -- (8,2) -- (9,1);

   \draw[->] (-1,0) -- (11,0);
   \draw[->] (0,-1) -- (0,3);
\end{tikzpicture}
\caption{An excursion with two solitons of the same height.}
\label{ex2}
\end{figure}

In Fig.~\ref{ex3}, the minimum of the excursion is attained at an interior point. Moreover, this minimum lies to the right of the excursion’s maximum.
In \cite{FNRW} a discrete analogue of this situation would be treated as a configuration consisting of two separate excursions.
By contrast, we consider it a single excursion.
This choice allows us to simplify the presentation, while the underlying constructions remain equivalent.

\begin{figure}[ht]
\centering
\begin{tikzpicture}[scale=0.4, thick]

\draw[blue, ultra thick] (0,0) -- (2,2) -- (4,0);
\draw[red, ultra thick] (4,0) -- (5,1) -- (6,0);

   \draw[->] (-1,0) -- (7,0);
   \draw[->] (0,-1) -- (0,3);
\end{tikzpicture}
\caption{The minimum of the excursion is reached at a point inside the excursion.}
\label{ex3}
\end{figure}

Along the article we have used the set notation for $\sN$, when in fact it is a multiset. That is, points may appear with multiplicity. See Fig.~\ref{ex4} for an example. When the points are sampled as a Poisson point process this happens with $0$ probability.

\begin{figure}[ht]
\centering
\begin{tikzpicture}[scale=0.4, thick]

\draw[blue, ultra thick] (0,0) -- (2,2) -- (4,0);
\draw[red, ultra thick] (4,0) -- (6,2) -- (8,0);

   \draw[->] (-1,0) -- (9,0);
   \draw[->] (0,-1) -- (0,3);
   
      \draw[->] (10,0) -- (13,0);
   \draw[->] (11,-1) -- (11,3);

    \fill[blue] (11,2) -- (11.2,2) arc (0:180:0.2) -- cycle;

    \fill[red] (11,2) -- (11.2,2) arc (0:-180:0.2) -- cycle;
   
\end{tikzpicture}
\caption{The point configuration has two points at (0,2).}
\label{ex4}
\end{figure}

\section*{Acknowledgements}
\label{grazie}

D.G. acknowledges the financial support from the Italian Research Funding Agency (MIUR) through
PRIN project ``Emergence of condensation-like phenomena in interacting particle systems: kinetic and lattice models'', grant n. 202277WX43 and  from the Italian Ministry of Foreign Affairs and International Cooperation by the grant BR26GR05. I.A. gratefully acknowledges financial support from the Fundac\~ao de Amparo \`a Pesquisa do Estado de S\~ao Paulo (FAPESP) through project 2024/12778-0.

\section*{Data Availability} Data sharing not applicable to this article as no datasets were generated or analysed during the current study.

\section*{Conﬂicts of interest}
 The sources of ﬁnancial support received by the authors for this research are listed in
the acknowledgments section above.

\bibliographystyle{acm}
\bibliography{bbsc-bib}

@article{olla2025scalinglimitssolitonsboxball,
      title={Scaling limits of solitons in the box-ball system}, 
      author={Stefano Olla and Makiko Sasada and Hayate Suda},
      year={2025},
      journal={arXiv:2411.14818},
      eprint={2411.14818},
      archivePrefix={arXiv},
      primaryClass={math.PR},
      url={https://arxiv.org/abs/2411.14818}, 
}

@article{PitmanBM-Bessel,
 ISSN = {00018678},
 URL = {http://www.jstor.org/stable/1426125},
 author = {James W. Pitman},
 journal = {Advances in Applied Probability},
 number = {3},
 pages = {511--526},
 publisher = {Applied Probability Trust},
 title = {One-Dimensional {Brownian} Motion and the Three-Dimensional {Bessel} Process},
 urldate = {2026-06-17},
 volume = {7},
 year = {1975}
}

@book {evans,
    AUTHOR = {Evans, Steven N.},
     TITLE = {Probability and real trees},
    SERIES = {Lecture Notes in Mathematics},
    VOLUME = {1920},
      NOTE = {Lectures from the 35th Summer School on Probability Theory
              held in Saint-Flour, July 6--23, 2005},
 PUBLISHER = {Springer, Berlin},
      YEAR = {2008},
     PAGES = {xii+193},
      ISBN = {978-3-540-74797-0},
   MRCLASS = {60B99 (05C05 60J25)},
  MRNUMBER = {2351587},
MRREVIEWER = {Wolfgang Woess},
       DOI = {10.1007/978-3-540-74798-7},
       URL = {https://doi.org/10.1007/978-3-540-74798-7},
}

@article{MIT,
  title={The exact correspondence between conserved quantities of a periodic box-ball system and string solutions of the {Bethe} ansatz equations},
  author={Mada, Jun and Idzumi, Makoto and Tokihiro, Tetsuji},
  journal={Journal of mathematical physics},
  volume=47,
  number=5,
  pages=053507,
  year=2006,
  publisher={AIP}
}

@article{cbs,
  title={The {GGE} averaged currents of the classical {T}oda chain},
  author={Cao, Xiangyu and Bulchandani, Vir B and Spohn, Herbert},
  journal={Journal of Physics A: Mathematical and Theoretical},
  volume={52},
  number={49},
  pages={495003},
  year={2019},
  publisher={IOP Publishing}
}

@article{TS90,
author = {Takahashi ,Daisuke and Satsuma ,Junkichi},
title = {A Soliton Cellular Automaton},
journal = {Journal of the Physical Society of Japan},
volume = 59,
number = 10,
pages = {3514-3519},
year = 1990,
doi = {10.1143/JPSJ.59.3514},
URL = { https://doi.org/10.1143/JPSJ.59.3514},
eprint = {https://doi.org/10.1143/JPSJ.59.3514}
}

@article{bulchandani-cao-moore,
   title={Kinetic theory of quantum and classical {T}oda lattices},
   volume={52},
   ISSN={1751-8121},
   url={http://dx.doi.org/10.1088/1751-8121/ab2cf0},
   DOI={10.1088/1751-8121/ab2cf0},
   number={33},
   journal={Journal of Physics A: Mathematical and Theoretical},
   publisher={IOP Publishing},
   author={Bulchandani, Vir B and Cao, Xiangyu and Moore, Joel E},
   year={2019},
   month={Jul},
   pages={33LT01}
}

@article{FNRW,
  
 Author = {Ferrari, Pablo A. and Nguyen, Chi and Rolla, Leonardo T. and Wang, Minmin},
 Title = {Soliton decomposition of the box-ball system},
 FJournal = {Forum of Mathematics, Sigma},
 Journal = {Forum Math. Sigma},
 ISSN = {2050-5094},
 Volume = {9},
 Pages = {37},
 Note = {Id/No e60},
 Year = {2021},
 Language = {English},
 DOI = {10.1017/fms.2021.49}
}

@Article{KOSTY2006,
 Author = {Kuniba, Atsuo and Okado, Masato and Sakamoto, Reiho and Takagi, Taichiro and Yamada, Yasuhiko},
 Title = {Crystal interpretation of {Kerov}-{Kirillov}-{Reshetikhin} bijection},
 FJournal = {Nuclear Physics. B},
 Journal = {Nucl. Phys., B},
 ISSN = {0550-3213},
 Volume = {740},
 Number = {3},
 Pages = {299--327},
 Year = {2006},
 Language = {English},
 DOI = {10.1016/j.nuclphysb.2006.02.005},
 zbMATH = {5145271},
 Zbl = {1109.81043}
}

@article{Doyon_2019,
   title={Generalized hydrodynamics of the classical {T}oda system},
   volume={60},
   ISSN={1089-7658},
   url={http://dx.doi.org/10.1063/1.5096892},
   DOI={10.1063/1.5096892},
   number={7},
   journal={Journal of Mathematical Physics},
   publisher={AIP Publishing},
   author={Doyon, Benjamin},
   year={2019},
   month={Jul},
   pages={073302}
}

@article{Spohn_2019,
   title={Generalized {G}ibbs {E}nsembles of the Classical {T}oda Chain},
   ISSN={1572-9613},
   url={http://dx.doi.org/10.1007/s10955-019-02320-5},
   DOI={10.1007/s10955-019-02320-5},
   journal={Journal of Statistical Physics},
   publisher={Springer Science and Business Media LLC},
   author={Spohn, Herbert},
   year={2019},
   month={May}
}

@InCollection{FG18,
 Author = {Ferrari, Pablo A. and Gabrielli, Davide},
 Title = {Box-ball system: soliton and tree decomposition of excursions},
 BookTitle = {{XIII} symposium on probability and stochastic processes. Contributions and lecture notes, {U}niversidad {N}acional {A}ut\'onoma de {M}\'exico, {UNAM}, {M}\'exico {C}ity, {M}exico, {D}ecember 4--8, 2017},
 ISBN = {978-3-030-57512-0; 978-3-030-57513-7},
 Pages = {107--152},
 Year = {2020},
 Publisher = {Cham: Birkh{\"a}user},
 Language = {English},
 DOI = {10.1007/978-3-030-57513-7_5},
 zbMATH = {7374180}
}

@Article{FG19,
 Author = {Ferrari, Pablo A. and Gabrielli, Davide},
 Title = {{BBS} invariant measures with independent soliton components},
 FJournal = {Electronic Journal of Probability},
 Journal = {Electron. J. Probab.},
 ISSN = {1083-6489},
 Volume = {25},
 Pages = {26},
 Note = {Id/No 78},
 Year = {2020},
 Language = {English},
 DOI = {10.1214/20-EJP475},
 zbMATH = {7252710}
}

@Article{HamblyMartinOConnell01,
  author  = {Hambly, Ben and Martin, James B. and O'Connell, N.},
  title   = {Pitman's {$2M-X$} Theorem for skip-free random walks with {M}arkovian increments},
  journal = {Electron. Commun. Probab.},
  year    = {2001},
  volume  = {6},
  pages   = {73--77},
  doi     = {10.1214/ECP.v6-1036},
}

@article {CroydonSasada20,
    AUTHOR = {Croydon, David A. and Sasada, Makiko},
     TITLE = {Generalized hydrodynamic limit for the box-ball system},
   JOURNAL = {Comm. Math. Phys.},
  FJOURNAL = {Communications in Mathematical Physics},
    VOLUME = {383},
      YEAR = {2021},
    NUMBER = {1},
     PAGES = {427--463},
      ISSN = {0010-3616},
   MRCLASS = {82C22 (37K40 82C23 82C70)},
  MRNUMBER = {4236070},
       DOI = {10.1007/s00220-020-03914-x},
       URL = {https://doi.org/10.1007/s00220-020-03914-x},
}

@InCollection{zbMATH07500585,
 Author = {Croydon, David A. and Sasada, Makiko},
 Title = {Discrete integrable systems and {Pitman}'s transformation},
 BookTitle = {Stochastic analysis, random fields and integrable probability -- Fukuoka 2019. Proceedings of the 12th Mathematical Society of Japan, Seasonal Institute (MSJ-SI), Kyushu University, Japan, 31 July -- 9 August 2019},
 ISBN = {978-4-86497-094-5},
 Pages = {381--403},
 Year = {2021},
 Publisher = {Tokyo: Mathematical Society of Japan},
 Language = {English},
 DOI = {10.2969/aspm/08710381},
 zbMATH = {7500585}
}

@article{zbMATH07620722,
 author = {Croydon, David A. and Sasada, Makiko and Tsujimoto, Satoshi},
 title = {Bi-infinite solutions for {KdV}- and {Toda}-type discrete integrable systems based on path encodings},
 fjournal = {Mathematical Physics, Analysis and Geometry},
 journal = {Math. Phys. Anal. Geom.},
 issn = {1385-0172},
 volume = {25},
 number = {4},
 pages = {71},
 note = {Id/No 27},
 year = {2022},
 language = {English},
 doi = {10.1007/s11040-022-09435-4},
 zbMATH = {7620722},
 Zbl = {1508.37098}
}

@book{zbMATH07653279,
 author = {Croydon, David A. and Kato, Tsuyoshi and Sasada, Makiko and Tsujimoto, Satoshi},
 title = {Dynamics of the box-ball system with random initial conditions via {Pitman}'s transformation},
 fseries = {Memoirs of the American Mathematical Society},
 series = {Mem. Am. Math. Soc.},
 issn = {0065-9266},
 volume = {1398},
 isbn = {978-1-4704-5633-7; 978-1-4704-7399-0},
 year = {2023},
 publisher = {Providence, RI: American Mathematical Society (AMS)},
 language = {English},
 doi = {10.1090/memo/1398},
 zbMATH = {7653279},
 Zbl = {1510.37002}
}

@article{ffgs,
  title={Hard Rod Hydrodynamics and the {Levy} {Chentsov} Field},
  author={Ferrari, Pablo A and Franceschini, Chiara and Grevino, Dante GE and Spohn, Herbert},
  journal={arXiv preprint arXiv:2211.11117},
  year={2022}
}

@article{spohn2023,
  title={Hydrodynamic scales of integrable many-particle systems},
  author={Spohn, Herbert},
  journal={arXiv preprint arXiv:2301.08504},
  year={2023}
}

@article{fo2022,
  title={Diffusive Fluctuations in Hard Rods System},
  author={Olla, Stefano and Ferrari, Pablo A.},
  journal={arXiv preprint arXiv:2210.02079},
  year={2022}
}

@Article{cs-markov,
 Author = {Croydon, David A. and Sasada, Makiko},
 Title = {Invariant measures for the box-ball system based on stationary {Markov} chains and periodic {Gibbs} measures},
 FJournal = {Journal of Mathematical Physics},
 Journal = {J. Math. Phys.},
 ISSN = {0022-2488},
 Volume = {60},
 Number = {8},
 Pages = {083301, 25},
 Year = {2019},
 Language = {English},
 DOI = {10.1063/1.5095622},
 zbMATH = {7102716},
 Zbl = {1426.37013}
}

@Article{BS,
 Author = {Boldrighini, C. and Suhov, Y. M.},
 Title = {One-dimensional hard-rod caricature of hydrodynamics: ``{Navier}-{Stokes} correction'' for local equilibrium initial states},
 FJournal = {Communications in Mathematical Physics},
 Journal = {Commun. Math. Phys.},
 ISSN = {0010-3616},
 Volume = {189},
 Number = {2},
 Pages = {577--590},
 Year = {1997},
 Language = {English},
 DOI = {10.1007/s002200050218},
 zbMATH = {1095963},
 Zbl = {0895.76002}
}

@article{bds,
   author={Boldrighini, C. and Dobrushin, R. L. and Sukhov, Yu. M.},
   title={One-dimensional hard rod caricature of hydrodynamics},
   volume={31},
   number={3},
   journal={J. Stat. Phys.},
   pages={577-616},
   year={1983},
}

@article{mss2023,
 author = {Mucciconi, Matteo and Sasada, Makiko and Sasamoto, Tomohiro and Suda, Hayate},
 title = {Relationships between two linearizations of the box-ball system: {Kerov}-{Kirillov}-{Reschetikhin} bijection and slot configuration},
 fjournal = {Forum of Mathematics, Sigma},
 journal = {Forum Math. Sigma},
 issn = {2050-5094},
 volume = 12,
 pages = 40,
 note = {Id/No e55},
 year = 2024,
 language = {English},
 doi = {10.1017/fms.2024.39},
 zbMATH = 7846243,
 Zbl = {1546.82043}
}

@Article{KS2009,
 Author = {Kirillov, Anatol N. and Sakamoto, Reiho},
 Title = {Relationships between two approaches: rigged configurations and 10-eliminations},
 FJournal = {Letters in Mathematical Physics},
 Journal = {Lett. Math. Phys.},
 ISSN = {0377-9017},
 Volume = 89,
 Number = 1,
 Pages = {51--65},
 Year = 2009,
 Language = {English},
 DOI = {10.1007/s11005-009-0318-3},
 zbMATH = 5601503,
 Zbl = {1251.37019}
}

@article{zbMATH03491985,
 author = {Kac, Mark},
 title = {A stochastic model related to the telegrapher's equation},
 fjournal = {Rocky Mountain Journal of Mathematics},
 journal = {Rocky Mt. J. Math.},
 issn = {0035-7596},
 volume = {4},
 pages = {497--509},
 year = {1974},
 language = {English},
 doi = {10.1216/RMJ-1974-4-3-497},
 zbMATH = {3491985},
 Zbl = {0314.60052}
}

@book{zbMATH07582508,
 author = {Ratanov, Nikita and Kolesnik, Alexander D.},
 title = {Telegraph processes and option pricing},
 edition = {2nd edition},
 isbn = {978-3-662-65826-0; 978-3-662-65827-7},
 year = {2022},
 publisher = {Cham: Springer},
 language = {English},
 doi = {10.1007/978-3-662-65827-7},
 zbMATH = {7582508},
 Zbl = {1515.60017}
}

@misc{arXiv:2604.14346,
 author = {Amol Aggarwal and Matthew Nicoletti},
 title = {Fluctuations for the {Toda} lattice},
 year = {2026},
 howpublished = {Preprint, {arXiv}:2604.14346 [math.{PR}] (2026)},
 url = {https://arxiv.org/abs/2604.14346},
 arXiv = {arXiv:2604.14346}
}

@book{zbMATH07852571,
 author = {Spohn, Herbert},
 title = {Hydrodynamic scales of integrable many-body systems},
 isbn = {978-981-12-8352-9; 978-981-12-8354-3},
 year = {2024},
 publisher = {Singapore: World Scientific},
 language = {English},
 doi = {10.1142/13600},
 zbMATH = {7852571},
 Zbl = {1553.37019}
}

\vspace{2cm}
\parskip 0pt
\parindent 0pt

\noindent Inés Armendáriz, Pablo Blanc, Pablo A. Ferrari

\noindent {Departamento de Matemática, Facultad de Ciencias Exactas y Naturales, Universidad de Buenos Aires and IMAS-CONICET, Buenos Aires, Argentina}

\noindent

\href{mailto:iarmend@dm.uba.ar}{\tt iarmend@dm.uba.ar}, 
\href{mailto:pblanc@dm.uba.ar}{\tt pblanc@dm.uba.ar}, 
\href{mailto:pferrari@dm.uba.ar}{\tt pferrari@dm.uba.ar}, 

\href{https://mate.dm.uba.ar/~pferrari/}{mate.dm.uba.ar/$\sim$pferrari/}

\bigskip
\noindent Davide Gabrielli

\noindent {Department of Mathematics, University of L'Aquila,  L'Aquila, Italy}

\noindent \href{mailto: gabriell@univac.it}{\tt gabriell@univac.it}

\href{https://people.disim.univaq.it/~gabriell/}{people.disim.univaq.it/$\sim$gabriell/}

\end{document}